\documentclass[12pt, reqno]{amsart}
\usepackage{graphicx} 

\usepackage{hyperref}
\usepackage{amscd, amssymb, latexsym, amsmath, amscd, enumerate}
\usepackage{amstext, anysize, amsthm}
\usepackage[normalem]{ulem}
\usepackage[sc]{mathpazo} 
\usepackage{enumitem}
\usepackage{pst-node}
\usepackage{multirow}
\usepackage{booktabs, tabularx, ragged2e}
\marginsize{2.5cm}{2.5cm}{2cm}{2cm}
\usepackage{graphics}
\usepackage{color}
\usepackage[all]{xy}
\usepackage{tikz}
\usepackage{tikz-cd}
\usepackage[square, sort, numbers]{natbib}
\newtheorem{theorem}{Theorem}[section]

\newtheorem{corollary}[theorem] {Corollary}
\newtheorem{definition}[theorem]{Definition}

\newtheorem{lemma} [theorem]{Lemma}

\newtheorem{proposition}[theorem]{Proposition}
\newtheorem{remark}[theorem]{Remark}

\numberwithin{equation}{section}

\newcommand{\F}{\mathbb{F}}

\newcommand{\Tr}{{\rm Tr}}

\newcommand{\GL}{{\rm GL}}

\newcommand{\Hom}{{\rm Hom}}

\newcommand{\Ind}{{\rm Ind}}

\newcommand{\N}{{\rm N}}

\newcommand{\Res}{{\rm Res}}

\newcommand{\Irr}{{\rm Irr}}

\newcommand{\NS}{\mathrm{NS}}

\newcommand{\Conj}{{\rm Conj}}

\title[Product of conjugacy classes v/s tensor product]{Trilinear forms for \( \GL_2(\F_q)\) via conjugacy classes} 

\author{Ankita Parashar}
\address{Department of Mathematics, IIT Bombay, Mumbai, Maharashtra - 400076}
\email{ankitaparashar216@gmail.com}

\author{Shiv Prakash Patel}
\address{Department of Mathematics, IIT Delhi, Hauz Khas, New Delhi - 110016} 
\email{shivprakashpatel@gmail.com}

\keywords{Trilinear forms, product of Conjugacy classes}
\subjclass[2020]{Primary: 20C33, Secondary: 20G05, 20C15}

\begin{document}

\begin{abstract}
It is well known that for a finite group \(G\), the number of elements in $\Irr(G)$ the set of irreducible representations up to isomorphism is the same as the number of elements in $\Conj(G)$ the set of conjugacy classes in $G$ although there is no natural bijection between these two sets.
For $G=\GL_2(\F_q)$, where $\F_q$ is the finite field with $q$ elements, we give a bijection between $\Irr(G)$ and $\Conj(G)$, which behaves well with the tensor product of irreducible representations on one side and the product of conjugacy classes on the other.
In particular, we relate the dimension of the trilinear forms with the multiplicity of the identity element in the triple product of the corresponding conjugacy classes.
\end{abstract}


\maketitle

\section{Introduction}\label{sec:intro}

Let \(G\) be a group. Consider \(G\) as a subgroup of \(G \times G \times G\) via diagonal embedding \(G  \hookrightarrow G \times G \times G \). 
Let \( \pi_1, \pi_2, \pi_3\) be its irreducible representations of \(G\).
The vector space of \(G\)-invariant linear forms \( \Hom_{G} (\pi_1 \otimes \pi_2 \otimes \pi_3, \mathbb{1}) \) is called the space of trilinear forms on  \( \pi_1, \pi_2, \pi_3\); where \( \mathbb{1} \) denotes the trivial representation of \(G\).
For a local field \(F\), the trilinear forms on representations of \(G=\GL_2(F)\) is deeply rooted with the arithmetic invariants attached to these representations via local Langlands correspondence \cite{Prasad-90, Prasad-07}. 
The dimension of the space of trilinear forms on representations of \(\GL_2(F)\) is less than or equal to one.
The trilinear forms on representations of \(\GL_2(F)\) are also related to the trilinear forms of representations of \(D^{\times}\), where \(D\) is the quaternion division algebra over \(F\), via Jacquet-Langlands correspondence \cite{Prasad-90}.

Let \( \F_q\) be a finite field with \(q\) elements and odd characteristic.
In this paper, we discuss the trilinear forms on representations of \(\GL_2(\F_q)\) and relate it with triple product of the conjugacy classes in \(\GL_2(\F_q)\). 
Let us recall the following result due to Prasad, see Example \cite[Example 2.5]{Prasad-2020} for \({\rm PGL}_2(\F_q)\), adapted for \(\GL_2(\F_q)\).
\begin{theorem}
Let \( \pi_1, \pi_2, \pi_3\) be irreducible representations of \(\GL_2(\F_q)\) such that the product of their central characters is trivial.    
Assume that all \(\pi_i\)'s are either principal series representations or cuspidal representations.
Then, except in a few exceptional cases, we have  \(\dim \Hom_{\GL_2(\F_q)} (\pi_1 \otimes \pi_2 \otimes \pi_3, \mathbb{1}) =1 \).
If all \(\pi_i\)'s are principal series representations (respectively, cuspidal representations) in the exceptional case, the dimension of the space of trilinear forms is 2 (respectively, 0). 
\end{theorem}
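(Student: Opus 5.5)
The plan is to compute $\dim \Hom_{G}(\pi_1\otimes\pi_2\otimes\pi_3,\mathbb{1}) = \langle \chi_{\pi_1}\chi_{\pi_2}\chi_{\pi_3}, 1\rangle_G = \frac{1}{|G|}\sum_{g\in G}\chi_{\pi_1}(g)\chi_{\pi_2}(g)\chi_{\pi_3}(g)$ by summing over the four types of conjugacy classes of $G=\GL_2(\F_q)$. These are the central classes $a I$, the non-semisimple classes $a\begin{pmatrix}1&1\\0&1\end{pmatrix}$, the split regular semisimple classes $\mathrm{diag}(a,b)$ with $a\neq b$, and the elliptic classes attached to $x\in\F_{q^2}^\times\setminus\F_q^\times$, with centralizer orders $|G|$, $q(q-1)$, $(q-1)^2$ and $q^2-1$. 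For a principal series $\pi(\chi_1,\chi_2)$ ($\chi_1\neq\chi_2$), the character values are $(q+1)\chi_1\chi_2(a)$, $\chi_1\chi_2(a)$, $\chi_1(a)\chi_2(b)+\chi_2(a)\chi_1(b)$ and $0$. For a cuspidal $\pi_\theta$ ($\theta\neq\theta^q$), they are $(q-1)\theta(a)$, $-\theta(a)$, $0$ and $-(\theta(x)+\theta(x^q))$. The hypothesis that the product of the central characters is trivial makes the central and unipotent contributions independent of $a$. The mixed case (some principal, some cuspidal) lies outside the statement, so I treat only the all-principal and all-cuspidal cases.

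In the all-principal case the elliptic term vanishes. The central and unipotent terms give $\frac{(q-1)}{|G|}(q+1)^3 + \frac{(q-1)(q^2-1)}{|G|}$ after counting the $q^2-1$ elements in each unipotent-type class. Both are of order $1/q$, and combining them gives a simple rational expression. The split term is $\frac{1}{2(q-1)^2}\sum_{a\neq b}\prod_i(\chi_{i1}(a)\chi_{i2}(b)+\chi_{i2}(a)\chi_{i1}(b))$. I will expand the product into $8$ monomials $\mu(a)\nu(b)$, then restore the diagonal $a=b$ and apply orthogonality of characters of $(\F_q^\times)^2$. Each monomial contributes $1$ exactly when $\mu=\nu=1$, and the diagonal correction cancels against the central and unipotent terms. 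Pairing the monomials under the swap $a\leftrightarrow b$, the dimension equals the number of sign choices $\epsilon\in\{\pm\}^3$ up to global swap for which $\prod_i \chi_i^{\epsilon_i}$ is trivial. Here $\chi_i^{+}=\chi_{i1}$ and $\chi_i^{-}=\chi_{i2}$, and the central character condition forces the complementary product to be trivial too. Generically this number is $1$; the exceptional case is when two independent choices work, giving dimension $2$.

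In the all-cuspidal case the split term vanishes. The central and unipotent terms give $\frac{(q-1)^4 - (q^2-1)(q-1)}{|G|}$. The elliptic term is $-\frac{1}{2(q^2-1)}\sum_{x\notin\F_q^\times}\prod_i(\theta_i(x)+\theta_i^q(x))$. I will again expand into $8$ characters $\prod\theta_i^{\pm}$ of $\F_{q^2}^\times$ and complete the sum to all of $\F_{q^2}^\times$, subtracting the $\F_q^\times$ part. On $\F_q^\times$ every $\theta_i^{\pm}$ restricts to the central character, so the subtracted part is $8(q-1)$ divided by $2(q^2-1)$. That part combines with the central and unipotent terms to produce the constant $1$. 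The full sum contributes $-\frac{1}{2}\#\{\epsilon : \prod\theta_i^{\epsilon_i}=1\}$, so the dimension is $1$ minus the number of swap-pairs of trivial products. This gives $0$ in the exceptional case, where such a relation like $\theta_1\theta_2\theta_3=1$ or $\theta_1\theta_2^q\theta_3=1$ holds. The main obstacle is not the orthogonality but the bookkeeping of the constants: verifying that the diagonal and $\F_q^\times$ corrections exactly cancel the central and unipotent contributions. I will also check that at most one swap-pair can be trivial for cuspidals, which uses $\theta_i\neq\theta_i^q$, so that the dimension is never negative. Finally I will identify the exceptional relations explicitly to match Prasad's list.
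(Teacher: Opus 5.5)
In the all-principal-series case your constant bookkeeping is wrong, and it changes the answer. The central term is not of order $1/q$. It equals $(q-1)(q+1)^3/|G|=(q+1)^2/\bigl(q(q-1)\bigr)$, which is close to $1$, and together with the unipotent term $1/q$ it gives $(q+3)/(q-1)$. Restoring the diagonal $a=b$ costs $-8(q-1)/\bigl(2(q-1)^2\bigr)=-4/(q-1)$, since every monomial has $\mu_\epsilon\nu_\epsilon=\prod_i\chi_{i1}\chi_{i2}=1$. These pieces therefore do not cancel. They sum to exactly $1$, just as in your (correct) cuspidal computation, and the right formula is
\[
\dim\Hom_G(\pi_1\otimes\pi_2\otimes\pi_3,\mathbb{1})=1+\tfrac12\,\#\bigl\{\epsilon\in\{\pm\}^3 : \prod_i\chi_i^{\epsilon_i}=1\bigr\}.
\]
Consequently your claims that ``generically this number is $1$'' and that the exceptional case is when ``two independent choices work'' are both false. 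Because $\mu_{-\epsilon}=\mu_\epsilon^{-1}$, triviality depends only on the swap-class. The condition $\prod_i\chi_{i1}\chi_{i2}=1$ does not force any $\mu_\epsilon$ to be trivial, and for generic characters none is, so your formula as stated would give dimension $0$. Moreover, two distinct swap-classes always have representatives differing in exactly one coordinate $i$. If both were trivial, then $\chi_{i1}=\chi_{i2}$, contradicting irreducibility; this is the same argument, with $\theta_i\neq\theta_i^q$, that you planned for cuspidals. Hence at most one swap-class is trivial. The exceptional case is exactly one trivial swap-class, which is the paper's condition $\mathbb{1}\in\mathcal{B}$, and it gives $1+1=2$.

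You also should not discard mixed triples. The statement covers them, as do Theorem~\ref{intro: main theorem} and Proposition~\ref{trivial representation in pi_1 otimes pi_2 otimes pi_3}(3), and the answer there is $1$. In your framework this is immediate: the split and elliptic terms vanish, and the central and unipotent terms sum to $|G|$ in both cases:
\[
(q-1)\bigl[(q+1)^2(q-1)-(q^2-1)\bigr]=|G|, \qquad (q-1)\bigl[(q+1)(q-1)^2+(q^2-1)\bigr]=|G|,
\]
for two principal series and one cuspidal, and for one principal series and two cuspidals, respectively. With these repairs your argument is a valid proof by a different route from the paper's. The paper takes the explicit decomposition of $\pi_1\otimes\pi_2$ from Archita--Hassain and uses $\Hom_G(\pi\otimes\pi_3,\mathbb{1})\cong\Hom_G(\pi,\pi_3^\vee)$ to pick out the constituents dual to $\pi_3$. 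Yours needs only the character table and orthogonality, and it exhibits the uniform shape $1\pm(\text{number of trivial swap-classes})$ directly.
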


In particular, the dimension of the space of trilinear forms for \(\GL_2(\F_q)\) need not be less than or equal to one, it can be two in some exceptional cases. These exceptional cases are described in Section 6.

For a finite group \(G\), let \( \Irr(G)\) and \(\Conj(G)\) denote the set of irreducible representations of \(G\) up to isomorphism  and the set of conjugacy classes of \(G\), respectively.
A fundamental result for finite groups \(G\) is \( |\Irr(G)| =   |\Conj(G)| \).
Thus, there exists a bijection between \( \Irr(G) \) and \(\Conj(G)\). For an arbitrary finite group, there is no canonical bijection between \(\Irr(G)\) and \(\Conj(G)\).
For some families of groups, various connections between conjugacy classes and representations are already known.
For example, the representations of symmetric groups and their connection to conjugacy classes via partitions has been extensively studied, for example, see \cite{James-Kerber}.
In addition, Young diagrams provide a combinatorial framework that plays a central role in this correspondence.
Similar phenomena arise for finite groups of Lie type, where the work of Green \cite{Green1955} and Deligne--Lusztig \cite{Deligne-Lusztig-1976} provides powerful methods for studying their characters.

In this paper, for \(G=\GL_2(\F_q)\), we will give a bijection (see Section \ref{sec:6}) between 
\[
\Psi : \Irr(G) \rightarrow \Conj(G)
\] 
which behaves well in the sense explained below.

Let \(C_g\) denote the conjugacy class such that \(g \in C_{g}\) for \(g \in G\).
In analogy to the tensor product \(\pi_1 \otimes \pi_2\) for \(\pi_1, \pi_2 \in \Irr(G)\),  we consider the product \(C_{g_1} \cdot C_{g_2}\) of two conjugacy classes \(C_{g_1},  C_{g_2} \in \Conj(G) \).
Note that an element in the product \(C_{g_1} \cdot C_{g_2}\) may be realized in many ways and hence we consider it as a multiset. It can easily be seen that if \(g \in C_{g_1} \cdot C_{g_2}\) then \( C_{g} \subset C_{g_1} \cdot C_{g_2}\) and every element in \(C_{g}\) appears the same number of times which we call the multiplicity of \(g\) or the multiplicity of \(C_{g}\).
The underlying philosophy of this article is that if \(\pi_1, \pi_2 \in \Irr(G)\) and \( \Psi(\pi_i)=C_{g_i} \in \Conj(G)\) for \(i=1,2\) then \(\pi_1 \otimes \pi_2\) is {\it determined} by the multiset \( C_{g_1} \cdot C_{g_2} \) and vice-versa.
More precisely, if \(\pi \in \Irr(G)\) appears in \(\pi_1 \otimes \pi_2\) with multiplicity, say \(m(\pi)\) and  \( \Psi(\pi)=C_{g} \in \Conj(G)\) appears in \( C_{g_1} \cdot C_{g_2}\) with multiplicity \(m(C_g)\), then \(m(\pi)\) and \(m(C_g)\) determine each other.
Thus, this correspondence \(\Psi\) links the algebraic structure of representations with the combinatorial structure of conjugacy classes. 
We note that the choice of \(\Psi\) is not unique, and therefore the above phenomenon depends on \(\Psi\). 

Guided by this philosophy, we consider trilinear forms and relate them to the triple product of conjugacy classes.
We need some notation to state the main theorem.
For \( \pi_1, \pi_2, \pi_2 \in \Irr(G)\), let 
\[
m(\mathbb{1}, \pi_1 \otimes \pi_2 \otimes \pi_3) = \dim \Hom_{G}( \pi_1 \otimes \pi_2 \otimes \pi_3, \mathbb{1}),
\]
be the dimension of \(G\)-invariant trilinear forms on \(\pi_1, \pi_2, \pi_3\) or the multiplicity of \( \mathbb{1}\) the trivial representation in \(\pi_1 \otimes \pi_2 \otimes \pi_3\).
For \( g_1, g_2, g_3 \in G\), let 
\[
m\bigl(I,C_{g_1}\cdot C_{g_2}\cdot C_{g_3}\bigr):=\bigl|\{(x_1,x_2,x_3)\in C_{g_1}\times C_{g_2}\times C_{g_3} :  x_1x_2x_3=I\}\bigr|,
\]
be the multiplicity of \(I\) the identity matrix in the multiset \(C_{g_1} \cdot C_{g_2} \cdot C_{g_3}\) the triple product of conjugacy classes.
Moreover, one defines exceptional triples \(\{\pi_1,\pi_2,\pi_3\}\) and \(\{ C_{g_1}, C_{g_2}, C_{g_3} \} \), see Definition~\ref{exceptional}. 
We put \(\mathcal{E}_{ss}=1\) (respectively, \(\mathcal{E}_{ns}=1\)) if all \(\pi_i\) are principal series (respectively, cuspidal) representations and \(\{\pi_1,\pi_2,\pi_3\}\) is exceptional, and \(\mathcal{E}_{ss}=0\) (respectively, \(\mathcal{E}_{ns}=0\)) otherwise.
Now, we state the main result of the paper as follows.
\begin{theorem} \label{intro: main theorem} 
For \(G=\GL_2(\F_q)\), there exists a bijection 
\(\Psi:\Irr(G)\longrightarrow\Conj(G)\) satisfying the following properties.
Let \( \pi_1, \pi_2, \pi_2 \in \Irr(G)\) be irreducible Deligne-Lusztig representations, i.e. either irreducible principal series representations or cuspidal representations such that the product of their central characters is trivial.
Let \( \Psi(\pi_i) =C_{g_i} \in \Conj(G)\) for \(i=1,2,3\). Then we have the following:
\begin{enumerate}
\item 
\(\{\pi_1,\pi_2,\pi_3\}\) is exceptional if and only if \(\{C_{g_1},C_{g_2},C_{g_3}\}\) is exceptional.
\item \(
m\bigl(I,C_{g_1}\cdot C_{g_2}\cdot C_{g_3}\bigr)
=q(q^2-1)\,m\bigl(\mathbb{1},\pi_1\otimes\pi_2\otimes\pi_3\bigr)+q(q+1)\,\mathcal{E}_{ss}+q(q-1)\,\mathcal{E}_{ns}.
\)
\end{enumerate}
\end{theorem}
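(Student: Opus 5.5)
The plan is to construct \(\Psi\) explicitly on the Deligne--Lusztig part of \(\Irr(G)\), and then compare the two sides of (2) using the Frobenius formula for the triple product of conjugacy classes:
\[
m\bigl(I,C_{g_1}C_{g_2}C_{g_3}\bigr)=\frac{|C_{g_1}||C_{g_2}||C_{g_3}|}{|G|}\sum_{\chi\in\Irr(G)}\frac{\chi(g_1)\chi(g_2)\chi(g_3)}{\chi(1)}.
\]

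For the bijection, parametrize the principal series by unordered pairs \(\{\chi_1,\chi_2\}\), \(\chi_1\neq\chi_2\), of characters of \(\F_q^\times\). Fix an isomorphism \(\widehat{\F_q^\times}\cong\F_q^\times\) and send this pair to the split regular semisimple class \(\mathrm{diag}(a_1,a_2)\), where \(a_i\leftrightarrow\chi_i\). Similarly, a cuspidal representation attached to a regular character \(\theta\) of \(\F_{q^2}^\times\), taken up to \(\theta\sim\theta^q\), goes to the elliptic class with eigenvalues \(\{\alpha,\alpha^q\}\), where \(\alpha\leftrightarrow\theta\). The isomorphism \(\widehat{\F_{q^2}^\times}\cong\F_{q^2}^\times\) is chosen compatibly with the norm, so that restriction to \(\F_q^\times\) matches. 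One-dimensional representations \(\chi\circ\det\) go to central classes \(aI\), and Steinberg twists go to the nonsemisimple classes \(a\begin{pmatrix}1&1\\0&1\end{pmatrix}\); this does not affect the theorem. Under this choice the central character \(\omega_\pi\) corresponds to the determinant of \(\Psi(\pi)\). Hence "product of central characters trivial" translates into \(\det(g_1g_2g_3)=1\), which is necessary for \(I\in C_{g_1}C_{g_2}C_{g_3}\).

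The first main step is to compute the left side directly. All three \(g_i\) are regular semisimple, with \(|C_g|=q(q+1)\) in the split case and \(q(q-1)\) in the elliptic case. One counts solutions of \(x_1x_2=x_3^{-1}\) by fixing \(x_1\in C_{g_1}\) and counting \(x_2\in C_{g_2}\) with \(x_1x_2\in C_{g_3^{-1}}\). This reduces to counting \(x_2\) in the class with prescribed trace \(\tr(x_1x_2)\), given that the determinant is automatic. That is a quadric count: for generic data it gives a uniform number, and it deviates exactly when \(x_1x_2\) is forced to be nonsemisimple or scalar, or the trace condition degenerates. These deviations define the exceptional class triples of Definition~\ref{exceptional}, namely when some eigenvalue products \(a_{1}^{\pm}a_2^{\pm}a_3^{\pm}\) equal \(1\) in a compatible way. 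Alternatively, one can evaluate the Frobenius sum with the known character table and keep only the terms that survive.

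The second step computes the right side via Prasad's theorem (recalled above): the multiplicity is \(1\), except \(2\) in the exceptional split case and \(0\) in the exceptional cuspidal case. Prasad's exceptional condition is phrased through characters, e.g. \(\chi_1^{\pm}\chi_2^{\pm}\chi_3^{\pm}\) trivial for suitable signs, or \(\theta\) relations in the cuspidal case. Under the chosen isomorphisms of character groups this is literally the eigenvalue condition, which gives (1). Mixed split/elliptic triples should never be exceptional on either side, and I will check this case by case.

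Finally, I compare the numbers in each of the cases: all split, two split and one elliptic, one split and two elliptic, all elliptic, each generic or exceptional. The generic count should be \(q(q^2-1)\). In the exceptional split case the count should be \(2q(q^2-1)+q(q+1)\), and in the exceptional cuspidal case it should be \(q(q-1)\). The main obstacle is this explicit counting, especially verifying that the exceptional corrections are exactly \(q(q+1)\) and \(q(q-1)\) rather than something depending on more degenerate coincidences, such as two eigenvalue relations at once. For this I would first try the Frobenius sum, where the character values on regular semisimple elements are simple sums of roots of unity, and the orthogonality of characters of \(\F_q^\times\) and \(\F_{q^2}^\times\) isolates the exceptional contributions.
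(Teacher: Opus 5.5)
Your proposal is correct, but the route you lean on differs from the paper's. The paper never uses the Frobenius formula. Instead:
\begin{itemize}
\item It counts $t(x;g_1,g_2)=|T(x)\cap C_{g_1}\cdot g_2|$ by solving $A+D=\Tr(g_1)$, $AD-BC=\det(g_1)$, $\Tr(Jg_2)=x$ explicitly. This is your ``direct quadric count''. Note that the jump occurs exactly at the two traces $\Tr(g_1g_2)$ and $\Tr(g_1g_2^{\sigma})$, where the quadric degenerates, not because $x_1x_2$ is forced to be non-semisimple.
\item It converts this into $m(C_h;g_1,g_2)=\frac{|C_{g_2}|}{|C_h|}\,t(x;g_1,g_2)$, using that regular semisimple classes are determined by trace and determinant, and then takes $m(I,\cdot)=|C_{g_3}|\,m(C_{g_3^{-1}};g_1,g_2)$.
\item The representation side comes from the Archita--Hassain decomposition of $\pi_1\otimes\pi_2$, not from Prasad's theorem.
\item Part (1) is $\Theta(\mathcal{B})=\mathcal{A}$, as you say.
\end{itemize}

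Your Frobenius sum does work. Orthogonality on $\F_q^\times\times\F_q^\times$, resp.\ $\F_{q^2}^\times$, gives
\[
m(I,C_{g_1}\cdot C_{g_2}\cdot C_{g_3})=q^3-q+k\,q^2(q+1),\qquad\text{resp.}\qquad q^3-q-k\,q^2(q-1),
\]
where $k$ is the number of $\sigma$-orbits of triples with $g_1'g_2'g_3'=I$. In mixed cases principal series vanish on elliptic elements and cuspidals vanish on split regular ones. So only $\alpha\circ\det$ and the Steinberg twists contribute, giving $q^3-q$ directly.

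The obstacle you flag is harmless. Two such triples in different $\sigma$-orbits can be arranged, by applying $\sigma$ to one of them, to differ in exactly one slot. Dividing then forces $g_i=g_i^{\sigma}$ for some $i$, a contradiction, so $k\le 1$. In the paper this fact is implicit in the two classes of $\mathbf{K}$ being distinct.

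The same orthogonality applied to $\frac{1}{|G|}\sum_g\chi_{\pi_1}\chi_{\pi_2}\chi_{\pi_3}(g)$ gives $1+k$, resp.\ $1-k$, and $1$ in mixed cases. This is safer than quoting Prasad, whose exceptional condition the paper never spells out. Your $\chi_1^{\pm}\chi_2^{\pm}\chi_3^{\pm}$ is the $\mathrm{PGL}_2$ form; for $\GL_2$ it must read $\Lambda_1'\Lambda_2'\Lambda_3'=\mathbb{1}$.

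The trade-off between the two routes:
\begin{itemize}
\item The paper's elementary count yields the full multiplicities of all semisimple classes in $C_{g_1}\cdot C_{g_2}$, which supports its tensor-product versus class-product philosophy. However, it leaves the elliptic and mixed cases as ``similar''.
\item Your character-theoretic route is uniform over all cases and uses one character table for both sides. It also shows why $m(\mathbb{1},\pi_1\otimes\pi_2\otimes\pi_3)$ is the coefficient of $q^3$. It gives no information on individual pairwise class products.
\end{itemize}
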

The following table summarizes the two multiplicities that appear in the above theorem.
\[
\begin{array}{|l|c|c|}
\hline
\pi_1,\pi_2,\pi_3
& m\bigl(\mathbb{1},\pi_1\otimes\pi_2\otimes\pi_3\bigr)
& m\bigl(I,C_{g_1}\cdot C_{g_2}\cdot C_{g_3}\bigr)\\ \hline
\text{Principal series, exceptional} & 2 & (2q-1)(q^2+q)\\
\text{Cuspidal, exceptional} & 0 & q^2-q\\
\text{Remaining cases} & 1 & q^3-q \\
\hline
\end{array}
\]

\begin{remark}
An analogous result holds when the three representations are not Deligne-Lusztig representations; however, its verification requires a case-by-case analysis.
We do not write many cases keeping in mind the simplicity of the above statement conveying the above mentioned philosophy.    
\end{remark}
\begin{remark}
There are many choices of the map \(\Psi\), which will be clear by its construction. It is important to note that the main theorem  does not depend on the choice of \(\Psi\).
\end{remark}
We prove this theorem by explicitly computing the multiplicities \( m(\mathbb{1},\pi_1\otimes\pi_2\otimes\pi_3)\) and \(m(I,C_{g_1} \cdot C_{g_2} \cdot C_{g_3})\).
The computation of  \( m(\mathbb{1}, \pi_1 \otimes \pi_2 \otimes \pi_3)\) follows from the decomposition of the tensor products of two irreducible representations obtained in \cite[Theorem 1.1]{Archita-Hassain} or by using characters of the corresponding representations as in \cite{G.Kaur}. 
On the other hand, the computation of \(m(I,C_{g_1} \cdot C_{g_2} \cdot C_{g_3})\) is one of the main parts of the proof of the above theorem.
 
Having calculated \(m(I,C_{g_1} \cdot C_{g_2} \cdot C_{g_3})\), we observe that it is always \(\geq 1\) if \(\det(g_1 g_2 g_3)=1\) and \(g_i\)'s are not scalar, giving a positive response to a well known question in Lie groups, known as the multiplicative Horn's problem or Deligne-Simpson problem \cite{Simpson-1990, Arielle-2025}, see Remark \ref{Horn's problem}.

The following additional feature of the formula in Theorem \ref{intro: main theorem} is worth emphasizing, which seems to have a natural generalization for groups of higher rank. 
First, let \(g_1,g_2,g_3\in G \) be semisimple with \(\det(g_1g_2g_3)=1\). 
If \(\{C_{g_1},C_{g_2},C_{g_3}\}\) is not exceptional, then
\(
m(I,\,C_{g_1}\cdot C_{g_2}\cdot C_{g_3})=q^3-q=|\mathrm{PGL}_2(\F_q)|.
\)
We expect an analogous statement for \(\GL_n(\F_q)\), i.e. for $g_1,g_2,g_3\in \GL_n(\F_q)$ with \(\det(g_1 g_2 g_3)=1\), in most of the cases, we would have
\[
m(I,\,C_{g_1}\cdot C_{g_2}\cdot C_{g_3})=|\mathrm{PGL}_n(\F_q)|=q^{n^2-1}\prod_{i=2}^{n}\bigl(1-q^{-i}\bigr).
\] 
Second, \( m(\mathbb{1},\pi_1\otimes\pi_2\otimes\pi_3)\) is the coefficient of \(q^3\) in \(m(I,C_{g_1}\cdot C_{g_2}\cdot C_{g_3})\).
We expect an analogous statement for \(\GL_n(\F_q)\), i.e.
\(m(\mathbb{1},\pi_1\otimes\pi_2\otimes\pi_3)\) should be the coefficient of \(q^{n^2-1}\) in \(m(I,C_{g_1}\cdot C_{g_2}\cdot C_{g_3})\).

The paper is organized in the following way.
In Section \ref{sec:2}, we introduce several notations and definitions that are relevant for the rest of the paper.
We describe products of relevant conjugacy classes in Section \ref{Sec:4}. 
In Section \ref{sec:6}, we construct the bijection \(\Psi\) between \(\Irr(\GL_2(\F_q))\) and \(\Conj(\GL_2(\F_q))\).
Finally, in Section \ref{Sec-5} we establish the main theorem.

\section{Preliminaries}\label{sec:2}
This section introduces the definitions and notation used throughout the paper.

Let \(\F_q\) be a finite field with \(q\) elements of characteristic \(p\) an odd prime.
Let $\delta$ be a square free element in $\F_q$.
Let $E$ be either $E\cong \F_q\oplus \F_q$ or $E\cong \F_{q^2}$.
Then \(\F_q \hookrightarrow E\) with the usual embedding when \(E=\F_{q^2}\) and
\(x \mapsto (x,x)\) when \(E= \F_q \oplus \F_q\).
Then we have a non-trivial automorphism $\sigma$ of $E$ as follows:
\[
z \mapsto \sigma(z) := z^{\sigma}=\left\{ \begin{array}{ll}
   (y,x) & \text{ for } z=(x,y)\in \F_q\oplus \F_q,
   \\
  z^q & \text{ for } z\in \F_{q^2}.
\end{array} \right.
\]

Then ${\mathrm {Aut}}_{\F_{q}}(E)$ the group of automorphisms of \(E\) over \(\F_q\) equals $\{1,\sigma\}$.
Clearly, $\F_q = E^{\sigma}$ the set of \(\sigma\)-fixed elements of \(E\).

\begin{definition}\label{Norm and trace Maps}
Let $ E $ be either $ \mathbb{F}_{q^2} $ or $ \mathbb{F}_q \oplus \mathbb{F}_q $. 
\begin{enumerate}
\item The norm map $\mathrm{N} : E \to \mathbb{F}_q $ is defined as $\mathrm{N}(z)=z\cdot z^{\sigma} $ for all $z\in E$.
\item The trace map $\mathrm{Tr} : E \to \mathbb{F}_q $ is defined as $\mathrm{Tr}(z)=z+z^{\sigma}$ for all $z\in E$.
\end{enumerate}
\end{definition}

The following is an easy observation.
\begin{lemma}\label{Norm and trace maps}
Let $ E $ be either $\mathbb{F}_{q^2} $  or $ E \cong \mathbb{F}_q \oplus \mathbb{F}_q $. 
Then 
\begin{enumerate}
\item For $ a \in \mathbb{F}_q^\times $,
\(
| \mathrm{N}^{-1}(a)| = \left\{ \begin{array}{ll}
        q+1  & \text{ if } E =\F_{q^2}, \\
         q-1 & \text{ if } E = \F_{q} \oplus \F_{q}.
\end{array} \right.
\)
\item \( \quad 
| \mathrm{N}^{-1}(0)| = \left\{ \begin{array}{ll}
        1  & \text{ if } E =\F_{q^2}, \\
         2q-1 & \text{ if } E = \F_{q} \oplus \F_{q}.
\end{array} \right.
\)
\end{enumerate}
\end{lemma}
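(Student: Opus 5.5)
The plan is to treat the two cases for \(E\) separately. In each case we count the fibres of the norm map directly, using only the definition \(\N(z)=z z^{\sigma}\) together with the structure of \(E\).

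\textbf{Case \(E=\F_{q^2}\).} Here \(\N(z)=z^{q+1}\), and \(E\) is a field, so \(z^{q+1}=0\) forces \(z=0\). Thus \(|\N^{-1}(0)|=1\).

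For \(a\in\F_q^\times\) we restrict \(\N\) to the multiplicative group. The group \(\F_{q^2}^\times\) is cyclic of order \(q^2-1\), and \(z\mapsto z^{q+1}\) is a homomorphism on it. Its kernel consists of the solutions of \(z^{q+1}=1\); since \(q+1\) divides \(q^2-1\), there are exactly \(q+1\) of them. Its image therefore has order \((q^2-1)/(q+1)=q-1\). The image lies in \(\F_q^\times\), because \((z^{q+1})^{q-1}=z^{q^2-1}=1\), and \(\F_q^\times\) also has order \(q-1\). So the map is onto \(\F_q^\times\), and every fibre over \(a\in\F_q^\times\) is a coset of the kernel, of size \(q+1\).

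\textbf{Case \(E=\F_q\oplus\F_q\).} Here \(\sigma(x,y)=(y,x)\), so
\[
\N(x,y)=(x,y)(y,x)=(xy,xy).
\]
Identifying \(\F_q\) with the diagonal, this says \(\N(x,y)=xy\).

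For \(a\neq 0\), the equation \(xy=a\) forces \(x\neq 0\), and then \(y=a/x\) is determined. This gives \(q-1\) solutions.

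For \(a=0\), we need \(x=0\) or \(y=0\). There are \(q\) pairs with \(x=0\) and \(q\) with \(y=0\), and the pair \((0,0)\) is counted in both, giving \(2q-1\) solutions.

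No step is a real obstacle. The only point needing care is the surjectivity of the norm onto \(\F_q^\times\) in the field case, which the order count above settles.
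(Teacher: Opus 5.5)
Your proof is correct: the direct count $xy=a$ in the split case and the kernel/image argument for $z\mapsto z^{q+1}$ on the cyclic group $\F_{q^2}^\times$ are both sound. The paper gives no proof of this lemma, calling it an easy observation, and your argument is the routine verification it leaves to the reader.
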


We now fix an embedding of \( E \) in \( M_2(\F_q) \) and hence \(E^{\times} \hookrightarrow \GL_2(\F_q)\) and then consider elements of \(E^{\times}\) as elements of \(\GL_2(\F_q)\).

\begin{enumerate}
\item If \( E \cong \F_q \oplus \F_q \), then \( E \hookrightarrow  M_2(\F_q) \) is given by
\((a, b) \mapsto \begin{pmatrix}
a & 0 \\
0 & b    \end{pmatrix} \).
\item If \( E = \F_{q^2} = \F_q(\sqrt{\delta}) \), where $\delta\in \F_{q}^\times\smallsetminus (\F_{q}^\times)^2$, then
$E\hookrightarrow M_2(\F_q)$ is given by
\( a + b\sqrt{\delta} \mapsto \begin{pmatrix}
        a & b\delta \\
        b & a
    \end{pmatrix} \).
\end{enumerate}

For $g\in \GL_2(\F_q)$, we write $C_g$ to denote the conjugacy class of \(g\).
The following is an easy observation.
\begin{lemma}\label{exact two elements of a class in the embedding}
If $g \in E^{\times}$,
then $C_{g} \cap E^{\times}=[g]_{\sigma}:=\{g, g^{\sigma}\}
$.
\end{lemma}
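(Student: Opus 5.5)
The plan is to use the fact that conjugacy in \(\GL_2(\F_q)\) of non-scalar elements is governed by the characteristic polynomial. For \(z \in E\) the embedded matrix is \(z\cdot I\) only when \(z \in \F_q\), and otherwise it is a non-scalar matrix. Its characteristic polynomial is
\[
X^2 - \Tr(z)X + \N(z).
\]
This is immediate for the diagonal embedding \((a,b)\mapsto \mathrm{diag}(a,b)\). For \(a+b\sqrt{\delta}\mapsto \begin{pmatrix} a & b\delta\\ b & a\end{pmatrix}\), the trace is \(2a=\Tr(z)\) and the determinant is \(a^2-b^2\delta=\N(z)\).

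\emph{Inclusion \(\{g,g^\sigma\}\subseteq C_g\cap E^\times\).} If \(g\in\F_q^\times\), then \(g^\sigma=g\) and there is nothing to prove. Otherwise, conjugation realizes \(\sigma\):
\begin{itemize}
\item for \(E=\F_q\oplus\F_q\), the permutation matrix \(w=\begin{pmatrix}0&1\\1&0\end{pmatrix}\) satisfies \(w\,\mathrm{diag}(a,b)\,w^{-1}=\mathrm{diag}(b,a)\);
\item for \(E=\F_{q^2}\), the matrix \(\mathrm{diag}(1,-1)\) sends \(\begin{pmatrix} a & b\delta\\ b & a\end{pmatrix}\) to \(\begin{pmatrix} a & -b\delta\\ -b & a\end{pmatrix}\), which is the image of \(a-b\sqrt{\delta}=z^q\).
\end{itemize}
So \(g^\sigma\in C_g\) in both cases.

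\emph{Inclusion \(C_g\cap E^\times\subseteq\{g,g^\sigma\}\).} Let \(h\in E^\times\) with \(h\in C_g\). Then \(h\) and \(g\) have the same characteristic polynomial, so \(\Tr(h)=\Tr(g)\) and \(\N(h)=\N(g)\). Hence \(h\) and \(h^\sigma\) are the two roots in \(E\) of
\[
X^2-\Tr(g)X+\N(g)=(X-g)(X-g^\sigma).
\]
One has to be careful here, because \(E=\F_q\oplus\F_q\) is not a field. It is cleaner to argue directly with eigenvalues:
\begin{itemize}
\item for \(E\) split, \(h=\mathrm{diag}(c,d)\) conjugate to \(\mathrm{diag}(a,b)\) forces \(\{c,d\}=\{a,b\}\) as multisets of eigenvalues, so \(h\in\{g,g^\sigma\}\);
\item for \(E=\F_{q^2}\), the field \(\F_q(g)\subset M_2(\F_q)\) identifies with \(E\), and \(h\) is a root of the minimal polynomial of \(g\) over \(\F_q\), whose roots in the field \(E\) are exactly \(g\) and \(g^q\).
\end{itemize}

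\emph{Scalar case.} When \(g\) is scalar, \(C_g=\{g\}\) and \(g^\sigma=g\), so the statement holds trivially.

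The only point needing any care is the split case, where \(E\) is not a field, and there the eigenvalue argument settles it.
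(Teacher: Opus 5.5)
Your argument is correct and complete. The paper gives no proof at all (it calls the lemma an easy observation), and your route is the standard justification implicit there: explicit conjugators $w$ and $\mathrm{diag}(1,-1)$ realizing $\sigma$, together with invariance of trace and determinant forcing $h\in\{g,g^\sigma\}$, which is the same trace--determinant principle the paper invokes later for semisimple classes. Your caution in the split case is warranted, since $(X-g)(X-g^\sigma)$ has four roots in $\F_q\oplus\F_q$, including the scalars $(a,a)$ and $(b,b)$; your eigenvalue comparison $\{c,d\}=\{a,b\}$ handles this correctly.
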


Recall that every element in \(\GL_2(\F_q)\) is either central or split non-semisimple or  split semisimple or non-split semisimple. 
We write \({\mathrm{Z, U, SS, NS}}\) for the subsets of \(\GL_2(\F_{q})\) containing elements from center, split non-semisimple, non-central split semisimple, non-split semisimple respectively.
We now summarize the conjugacy classes of $\GL_2(\F_q)$ in the following lemma, 
 see \cite{Fulton-Harris} for more details.

\begin{lemma}\label{conjugacy classes of GL2}
A complete set of representatives for the conjugacy classes of $\GL_2(\F_q)$ is given by
\(
Z\sqcup[\mathrm U]\sqcup[\mathrm{SS}]\sqcup[\mathrm{NS}],
\)
where
\begin{enumerate}
\item 
\(
Z =\{aI:a\in\F_q^\times\}\),
\item 
\(
[\mathrm U]=\left\{\begin{pmatrix}
a & 1\\
0 & a
\end{pmatrix}:a\in\F_q^\times\right\}\),
\item 
For \(E= \F_{q} \oplus \F_{q}\),  we have 
\(E^{\times} \smallsetminus \F_{q}^{\times} \twoheadrightarrow [\mathrm{SS}]\)  with fibers \( [x]_{\sigma}\) for \(x \in E^{\times} \smallsetminus \F_{q}^{\times}
\),
\item
For  \(E= \F_{q^2}\), we have 
\(E^{\times} \smallsetminus \F_{q}^{\times} \twoheadrightarrow [\mathrm{NS}]\) with fibers \([x]_{\sigma}\) for \( x \in E^{\times} \smallsetminus \F_{q}^{\times}\).
\end{enumerate}
\end{lemma}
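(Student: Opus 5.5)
The plan is to classify each $g\in\GL_2(\F_q)$ by its characteristic polynomial $\chi_g(t)=t^2-\Tr(g)t+\det(g)$ together with whether $g$ is scalar. This is the standard rational canonical form argument for $2\times 2$ matrices.

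First, suppose $g$ is scalar, $g=aI$. Its conjugacy class is $\{aI\}$, which gives the elements of $Z$. These classes are distinct for distinct $a\in\F_q^\times$.

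Now suppose $g$ is not scalar. Then some vector $v$ is not an eigenvector, so $\{v,gv\}$ is a basis of $\F_q^2$. In this basis $g$ is the companion matrix of $\chi_g$. Hence two non-scalar matrices are conjugate if and only if they have the same characteristic polynomial. Since scalar matrices are central and form singleton classes, this gives a complete description of the classes. It remains to sort the possible polynomials $\chi_g$ into three types and match each type with the stated representatives.

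\emph{Repeated root $a\in\F_q^\times$.} Here $\chi_g=(t-a)^2$. Both $g$ and $\begin{pmatrix} a&1\\0&a\end{pmatrix}$ are non-scalar with this characteristic polynomial, so they are conjugate. This gives $[\mathrm U]$.

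\emph{Distinct roots $a\neq b$ in $\F_q^\times$.} Here $g$ is conjugate to $\operatorname{diag}(a,b)=(a,b)\in E^\times\smallsetminus\F_q^\times$ with $E=\F_q\oplus\F_q$. Two elements $x,y\in E^\times\smallsetminus\F_q^\times$ give the same class exactly when they have the same characteristic polynomial. That is, $\N(x)=\N(y)$ and $\Tr(x)=\Tr(y)$, which means $\{x,x^\sigma\}$ and $\{y,y^\sigma\}$ are the same multiset of roots. This says $y\in[x]_\sigma$, which is consistent with Lemma~\ref{exact two elements of a class in the embedding}. So the map onto $[\mathrm{SS}]$ is surjective with fibres $[x]_\sigma$.

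\emph{Irreducible $\chi_g$.} Its roots $z,z^q$ lie in $\F_{q^2}\smallsetminus\F_q$. The embedded element $z=a+b\sqrt\delta$, viewed as the matrix $\begin{pmatrix}a&b\delta\\ b&a\end{pmatrix}$, has trace $2a=\Tr(z)$ and determinant $a^2-b^2\delta=\N(z)$. Thus it has characteristic polynomial $\chi_g$, it is non-scalar since $b\neq0$, and so it is conjugate to $g$. As in the split case, the elements of $E^\times\smallsetminus\F_q^\times$ mapping to the same class are exactly $z$ and $z^\sigma=z^q$, which is the fibre $[z]_\sigma$.

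The only step needing care is the cyclic vector argument for non-scalar $g$, and that is standard. Everything else is bookkeeping of characteristic polynomials.
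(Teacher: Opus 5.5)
Your proof is correct. The paper gives no argument of its own for this lemma and simply cites Fulton--Harris, and your classification of non-scalar classes by characteristic polynomial, via the cyclic vector and companion matrix, is the standard canonical-form argument behind that citation.

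One phrase is loose. In the split case $E=\F_q\oplus\F_q$ is not a field, so $x$ and $x^\sigma$ are not literally ``the roots.'' What you actually use, and what is true, is that $(c,d)$ has the same trace and norm as $(a,b)$ if and only if $\{c,d\}=\{a,b\}$, that is, $(c,d)\in[(a,b)]_\sigma$.
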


\section{Products of semisimple conjugacy classes}\label{Sec:4}

In this section, we describe the product of conjugacy classes of semisimple elements. 
First, we recall some definitions and simple observations that will be useful in what follows.

For a finite group \(G\) and \(g \in G\), we write \(C_{g}\) for the conjugacy class of \(g\).
For \(g_1, g_2 \in G\), we consider the product \(C_{g_1} \cdot C_{g_2} := \{ xy \mid x \in C_{g_1}, y \in C_{g_2} \}\) as a multiset as one element may be realized in more than one way, i.e. for \(g \in C_{g_1} \cdot C_{g_2}\) we may have \(g=xy = x' y'\) for \(x,x' \in C_{g_1}\) and \(y, y' \in C_{g_2}\) with \(x \neq x'\) and \(y \neq y'\).
For \(g \in G\), the number of times \(g\) appears in \(C_{g_1} \cdot C_{g_2}\) is called its multiplicity. 
More generally, for a multiset \(X\) and \(x \in X\), the number of times \(x\) appears in \(X\) is called the multiplicity of \(x\) in \(X\), which is denoted by \(m(x;X)\).
In particular, the multiplicity of \(g \in C_{g_1} \cdot C_{g_2}\) equals \( | \{ (x,y) \in C_{g_1} \times C_{g_2} : xy=g\}|\).
The following is an easy observation.
\begin{lemma}
Let \(G\) be a finite group  and \(g_1, g_2 \in G\). Then 
\begin{enumerate}
\item \( C_{g_1} \cdot C_{g_2} = C_{g_2} \cdot C_{g_1}\).
\item \(g \in C_{g_1} \cdot C_{g_2} \implies C_{g} \subset C_{g_1} \cdot C_{g_2}\).
\item If \(g, g' \in G\) are conjugate then their multiplicities in \(C_{g_1} \cdot C_{g_2}\) are the same.
\end{enumerate}
\end{lemma}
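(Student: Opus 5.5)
The plan is to prove the three items directly from the definitions, writing \(C_{g_1}\cdot C_{g_2}\) as the multiset \(\{xy : (x,y)\in C_{g_1}\times C_{g_2}\}\). Each item then becomes a statement about a bijection between sets of pairs. The only tool needed is that conjugation by a fixed element permutes each conjugacy class.

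For (1), I would use the map \((x,y)\mapsto (y,\, y^{-1}xy)\). It sends \(C_{g_1}\times C_{g_2}\) to \(C_{g_2}\times C_{g_1}\), because \(y^{-1}xy\) is conjugate to \(x\). Its inverse is \((y,z)\mapsto (yzy^{-1},\,y)\), so it is a bijection. Since \(y\cdot y^{-1}xy = xy\), the product is preserved. Hence every \(g\) has the same multiplicity in both multisets, and the multisets are equal.

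For (3), fix \(h\in G\) and set \(g' = hgh^{-1}\). The map \((x,y)\mapsto (hxh^{-1},\, hyh^{-1})\) preserves \(C_{g_1}\times C_{g_2}\), and its inverse is conjugation by \(h^{-1}\). It sends pairs with \(xy=g\) to pairs with product \(hxyh^{-1}=g'\), and this is a bijection between the two fibres. Therefore
\[
\bigl|\{(x,y)\in C_{g_1}\times C_{g_2} : xy=g\}\bigr| = \bigl|\{(x,y)\in C_{g_1}\times C_{g_2} : xy=g'\}\bigr|,
\]
so \(g\) and \(g'\) have the same multiplicity.

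Item (2) follows immediately from (3). If \(g\) occurs in \(C_{g_1}\cdot C_{g_2}\), its multiplicity is positive. Then every conjugate of \(g\) also has positive multiplicity, that is, \(C_g\subset C_{g_1}\cdot C_{g_2}\).

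I do not expect any real obstacle. The only point needing care in (1) is that equality is meant as multisets, so one must match multiplicities and not merely underlying sets; the explicit bijection handles this.
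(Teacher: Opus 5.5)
Your proof is correct and complete. The paper records this lemma as an easy observation and gives no proof, and your bijections are the standard argument it leaves implicit: $(x,y)\mapsto(y,\,y^{-1}xy)$ for (1), simultaneous conjugation by $h$ for (3), and (2) deduced from (3). The conjugation-by-$h$ step is the same device the paper uses to show that $|(C_{g_1}\cdot g_2)\cap T(x)|$ does not depend on the representative $g_2$.
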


\begin{definition}
For a finite group \(G\), let \(g, g_1, g_2 \in G\). Then any two elements in \(C_{g}\) have the same multiplicity in \(C_{g_1} \cdot C_{g_2}\), which we call the multiplicity of \(C_{g}\) in \(C_{g_1} \cdot C_{g_2}\) and denote it by \( m(C_g;g_1,g_2)\).
\end{definition}

\begin{definition}
For $x\in \F_q$,
define $T(x) := \{g\in \GL_2(\F_q) \mid \mathrm{Tr}(g)=x\}$.   
\end{definition}

\begin{lemma}\label{m(x) is independent of the representative g_2}
Let $g_1,g_2\in \GL_2(\F_q)$ and \(x \in \F_{q}\).
Then for any $g'_2\in C_{g_2}$,
\[
 |\left(C_{g_1}\cdot g_2\right)\cap T(x)|=|\left(C_{g_1}\cdot g'_2\right)\cap T(x)|. 
\]
\end{lemma}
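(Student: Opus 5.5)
Since \(g'_2 \in C_{g_2}\), we may write \(g'_2 = h g_2 h^{-1}\) for some \(h \in \GL_2(\F_q)\). The plan is to show that conjugation by \(h\) gives a bijection between the multisets \(C_{g_1}\cdot g_2\) and \(C_{g_1}\cdot g'_2\) that preserves traces. Here \(C_{g_1}\cdot g_2\) is understood as \(\{x g_2 : x \in C_{g_1}\}\), indexed by \(x \in C_{g_1}\). Right multiplication by \(g_2\) is injective, so this is in fact a set with \(|C_{g_1}|\) elements, and it does not matter whether we count it as a set or as a multiset.

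The key step is to consider the map \(\phi : C_{g_1} \to C_{g_1}\), \(x \mapsto h x h^{-1}\). It is a bijection because conjugation preserves conjugacy classes and has inverse given by conjugation by \(h^{-1}\). For \(x \in C_{g_1}\) we have
\[
\phi(x)\, g'_2 = h x h^{-1} \, h g_2 h^{-1} = h (x g_2) h^{-1},
\]
so \(y \mapsto h y h^{-1}\) carries \(C_{g_1}\cdot g_2\) bijectively onto \(C_{g_1}\cdot g'_2\).

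Since trace is invariant under conjugation, \(\Tr(h y h^{-1}) = \Tr(y)\). Hence this bijection restricts to a bijection between \((C_{g_1}\cdot g_2)\cap T(x)\) and \((C_{g_1}\cdot g'_2)\cap T(x)\), and the two cardinalities agree.

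There is no real obstacle. The only point needing care is the bookkeeping: the correspondence should be set up at the level of the index set \(C_{g_1}\), so that multiplicities match automatically.
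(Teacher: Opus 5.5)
Your proof is correct and is essentially the paper's argument: conjugation by $h$ gives a trace-preserving bijection from $C_{g_1}\cdot g_2$ onto $C_{g_1}\cdot g'_2$. You also check explicitly, via $h(xg_2)h^{-1}=(hxh^{-1})\,g'_2$, that the image lands in $C_{g_1}\cdot g'_2$, a step the paper leaves implicit.
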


\begin{proof}
Since \( g_2' \in C_{g_2} \), there exists \( h \in \GL_2(\F_q) \) such that $g_2' = h^{-1} g_2 h$.
Then we have a bijection 
\[
\phi: C_{g_1} \cdot g_2 \longrightarrow C_{g_1} \cdot g_2'
\quad \text{given by} \quad
\phi(y) := h^{-1} y h.
\]
Moreover, the trace map is invariant under conjugation, we have  $y \in T(x)$ if and only if  $\phi(y) \in T(x)$.
This proves the lemma.
\end{proof}

\begin{definition}
For $g_1,g_2\in \GL_2(\F_q)$ and \(x \in \F_q\), define $t(x; g_1, g_2):=|T(x) \cap C_{g_1} \cdot g_2| $.
\end{definition}
    
\begin{lemma}\label{formula count for semisimple elements}
Let $g_1,g_2\in \GL_2(\F_q)$ and $x\in \F_q$. 
Assume that \(
C_h \subset (C_{g_1}\cdot C_{g_2}) \cap T(x) \), 
for some $h\in \mathrm{SS}\cup \mathrm{NS}$.
Then
\begin{equation*}\label{formula for multiplicity count}
m(C_h;g_1,g_2)=
\dfrac{|C_{g_2}|}{|C_h|} \cdot t(x;g_1,g_2).
\end{equation*}
\end{lemma}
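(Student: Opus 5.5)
The plan is to count the set of pairs
\[
P:=\{(x,y)\in C_{g_1}\times C_{g_2} : xy\in C_h\}
\]
in two ways. On one side, $|P|=\sum_{h'\in C_h}|\{(x,y)\in C_{g_1}\times C_{g_2}: xy=h'\}| = |C_h|\cdot m(C_h;g_1,g_2)$, since every element of $C_h$ has the same multiplicity in $C_{g_1}\cdot C_{g_2}$.

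On the other side, I will sort the pairs in $P$ by their second coordinate $y\in C_{g_2}$. For fixed $y$, the number of $x$ with $(x,y)\in P$ is $|(C_{g_1}\cdot y)\cap C_h|$. Conjugating by $h'$ with $y=h'^{-1}g_2h'$ shows, exactly as in the proof of Lemma \ref{m(x) is independent of the representative g_2}, that this number does not depend on $y$. Hence
\[
|P|=|C_{g_2}|\cdot|(C_{g_1}\cdot g_2)\cap C_h|.
\]

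The key step is to show that $(C_{g_1}\cdot g_2)\cap C_h=(C_{g_1}\cdot g_2)\cap T(x)$, so that the right-hand count equals $t(x;g_1,g_2)$. Since $h\in \mathrm{SS}\cup\mathrm{NS}$, the class $C_h$ is determined by its characteristic polynomial, that is, by $\Tr$ and $\det$. Every element of $C_{g_1}\cdot g_2$ has determinant $\det g_1\det g_2=\det h$, where the last equality holds because $C_h\subset C_{g_1}\cdot C_{g_2}$. So an element $z\in C_{g_1}\cdot g_2$ with $\Tr z=x=\Tr h$ has the same characteristic polynomial as $h$. Since $h$ is not central and not of type $\mathrm U$, this characteristic polynomial has distinct roots, because repeated roots only occur for the classes in $Z$ and $[\mathrm U]$. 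Therefore $z$ is semisimple non-central with that characteristic polynomial, and by Lemma \ref{conjugacy classes of GL2}, $z\in C_h$. The reverse inclusion holds because $C_h\subset T(x)$.

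Equating the two counts gives $|C_h|\, m(C_h;g_1,g_2)=|C_{g_2}|\, t(x;g_1,g_2)$, which is the claimed formula. The only real point is the identification of the trace fibre with $C_h$. This identification is where the hypothesis $h\in\mathrm{SS}\cup\mathrm{NS}$ is used: for $h$ in $Z$ or $[\mathrm U]$, the fibre would mix the central class with the unipotent-type class.
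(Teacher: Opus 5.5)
Your argument is correct and is essentially the paper's: both double count by splitting $C_{g_1}\cdot C_{g_2}$ over $y\in C_{g_2}$ (using the conjugation invariance of Lemma \ref{m(x) is independent of the representative g_2}), and both use that a class in $\mathrm{SS}\cup\mathrm{NS}$ is the only class with its trace and determinant, so the trace-$x$ part of the product consists of exactly $m(C_h;g_1,g_2)$ copies of $C_h$. Your distinct-eigenvalue justification of that identification is a welcome extra detail. You should, however, rename the matrix variable in $P$, which clashes with the scalar $x$, and the conjugating element $h'$, which clashes with the elements $h'\in C_h$.
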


\begin{proof}
We prove this lemma by computing \(| (C_{g_1} \cdot C_{g_2}) \cap T(x) |\) in two different ways.
First, note that as a multi set we have
\( C_{g_1} \cdot C_{g_2} = \bigsqcup_{g \in C_{g_2}} C_{g_1}\cdot g \) and therefore
\[
(C_{g_1} \cdot C_{g_2}) \cap T(x) = \bigsqcup_{g \in C_{g_2}} (C_{g_1}\cdot g) \cap T(x). 
\]
Since every element in $ C_{g_1} \cdot g $ has multiplicity one, it follows that every element in $ C_{g_1} \cdot C_{g_2} $ has multiplicity $ |C_{g_2}| $.
Then we have
\[
| (C_{g_1} \cdot C_{g_2}) \cap T(x) | = t(x; g_1, g_2)  \cdot |C_{g_2}|. 
\]

On the other hand, the conjugacy class of  \(h \in \mathrm{SS}\cup \mathrm{NS}\) is determined by its trace and determinant.
Since \(C_h \subset (C_{g_1}\cdot C_{g_2}) \cap T(x)\) we get
\( \det(h) = \det(g_1 g_2)\) and \(\Tr(h)=x\). Thus \(C_h\) is the only conjugacy class in \(C_{g_1} \cdot C_{g_2}\) with \(\Tr(h)=x\) and hence
\[
| (C_{g_1} \cdot C_{g_2}) \cap T(x) | = |C_{h}| \cdot m(C_{h};g_1,g_2). 
\qedhere
\]
\end{proof}

\begin{remark}\label{trace and determinant determine the semisimple classes}
If \(h \in Z \cup {\mathrm U}\) then \(C_h\) is not determined by its trace and determinant. 
Then the argument in the proof of above lemma does not work for elements in \( Z \cup {\mathrm U}\).
However, one can write a formula for \(m(C_h; g_1, g_2)\) which we skip writing as it will not be needed.
\end{remark}

\begin{proposition}\label{main theorem-1-4}
\begin{enumerate}
\item
Let \( g_1, g_2 \in \mathrm{SS}\). 
Then for \(x \in \F_q\), we have
\[
 t(x; g_1, g_2) = 
\begin{cases}
        2q - 1 & \text{if } x \in \{ \mathrm{Tr}(g_1 g_2), \mathrm{Tr}(g_1 g_2^{\sigma}) \}, \\
        q - 1 & \text{otherwise.}
\end{cases}
\]
\item Let \( g_1, g_2 \in \mathrm{NS} \).
Then for \(x \in \F_q\), we have
\[
t(x; g_1, g_2)  = 
\begin{cases}
        1 & \text{if } x \in \{ \mathrm{Tr}(g_1 g_2), \mathrm{Tr}(g_1 g_2^{\sigma}) \}, \\
        q+1 & \text{otherwise.}
\end{cases}
\]
\item  Let \( g_1 \in \mathrm{SS} \), \( g_2 \in \mathrm{NS} \).
Then 
\( 
t(x; g_1, g_2)=q+1
\) for all \(x\in \F_q\).
\end{enumerate}
\end{proposition}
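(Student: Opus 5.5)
We count $t(x;g_1,g_2)=|T(x)\cap C_{g_1}\cdot g_2|$ by parametrizing $C_{g_1}$ explicitly and computing the trace of $y g_2$ for $y\in C_{g_1}$. By Lemma~\ref{m(x) is independent of the representative g_2} we may take $g_2$ to be the convenient representative in $E^\times$ (diagonal $\mathrm{diag}(c,d)$ with $c\neq d$ in the split case, $\begin{pmatrix} c & d\delta\\ d & c\end{pmatrix}$ with $d\neq 0$ in the non-split case). The key reduction is that $C_{g_1}$ is exactly the set of matrices $y=\begin{pmatrix}\alpha&\beta\\ \gamma&\epsilon\end{pmatrix}$ with $\alpha+\epsilon=\Tr(g_1)$, $\alpha\epsilon-\beta\gamma=\det(g_1)$, since $g_1$ is non-central semisimple and such classes are determined by trace and determinant. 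Thus $t(x;g_1,g_2)$ is the number of solutions of a system of two equations (trace, determinant) plus one linear condition $\Tr(yg_2)=x$ in the four unknowns.

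\textbf{Case $g_2=\mathrm{diag}(c,d)$ split (parts (1) and (3)).} Here $\Tr(yg_2)=c\alpha+d\epsilon$. Together with $\alpha+\epsilon=\Tr(g_1)$ and $c\neq d$, this linear system determines $\alpha,\epsilon$ uniquely as functions of $x$. The remaining condition is $\beta\gamma=\alpha\epsilon-\det(g_1)=:N(x)$. The number of pairs $(\beta,\gamma)\in\F_q^2$ with $\beta\gamma=N$ is $q-1$ if $N\neq 0$ and $2q-1$ if $N=0$; this is Lemma~\ref{Norm and trace maps} for $E=\F_q\oplus\F_q$.

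We therefore need to know when $N(x)=0$. This happens iff $\{\alpha,\epsilon\}$ are the two eigenvalues of $g_1$, i.e. iff $y$ is upper triangular with diagonal $(\lambda_1,\lambda_2)$ or $(\lambda_2,\lambda_1)$. That gives $x=c\lambda_1+d\lambda_2=\Tr(g_1g_2)$ or $x=\Tr(g_1 g_2^\sigma)$ (with $g_1$ taken diagonal). These two values may coincide, but then the count is still $2q-1$. This proves (1).

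For (3), with $g_1\in\mathrm{NS}$ and $g_2\in \mathrm{SS}$, the same computation applies, and the commutativity noted earlier lets us swap the roles. Now $\alpha,\epsilon$ can never be the eigenvalues of $g_1$, which lie outside $\F_q$. Indeed, $\alpha\epsilon=\det g_1$ together with $\alpha+\epsilon=\Tr g_1$ would force $g_1$ to have eigenvalues in $\F_q$. Hence $N(x)\neq0$ always and the count is $q-1$.

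This contradicts the claimed value $q+1$, so something is off. The issue is that the statement takes $g_1\in\mathrm{SS}$ and $g_2\in\mathrm{NS}$, and $t$ is not symmetric in $g_1,g_2$. For (3) I would instead conjugate $g_1$ into diagonal form and take $g_2$ non-split. Since $t$ counts $yg_2$ with $y\in C_{g_1}$, I can equivalently use the bijection $y\mapsto h^{-1}yh$ and parametrize as follows.

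\textbf{Case $g_2$ non-split (parts (2) and (3)).} Take $g_2=\begin{pmatrix} c&d\delta\\ d&c\end{pmatrix}$. Then $\Tr(yg_2)=c(\alpha+\epsilon)+d(\gamma\delta+\beta)$, so the condition becomes $\beta+\delta\gamma=(x-c\Tr g_1)/d=:s$, which is linear in $\beta,\gamma$. Substitute $\alpha=u+\Tr(g_1)/2$ and $\epsilon=-u+\Tr(g_1)/2$ (using odd $p$), so the determinant condition reads $u^2+\beta\gamma=D:=\Tr(g_1)^2/4-\det g_1$. Eliminating $\beta=s-\delta\gamma$ leaves the affine conic
\[
u^2-\delta\gamma^2+s\gamma=D .
\]
Completing the square in $\gamma$ turns this into $\N(u+\sqrt{\delta}\,\gamma')=D-s^2/(4\delta)$ with respect to $\F_{q^2}$. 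By Lemma~\ref{Norm and trace maps}, the count is $q+1$ if the right side is nonzero and $1$ if it is zero.

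When $g_1\in\mathrm{SS}$, $D$ is a nonzero square, and $D=s^2/(4\delta)$ is impossible because $\delta$ is a nonsquare. Hence the count is always $q+1$, which is (3). When $g_1\in\mathrm{NS}$, $D$ is a nonsquare, so $D=s^2/(4\delta)$ has exactly two solutions $s$ (that is, two values of $x$). It remains to identify these with $\Tr(g_1g_2)$ and $\Tr(g_1g_2^\sigma)$ by plugging in $y=g_1,g_1^\sigma\in E^\times$, for which the norm is visibly $0$, giving (2). This last identification, together with the bookkeeping of which of $g_1,g_2$ is conjugated, is the main obstacle. The rest is routine.
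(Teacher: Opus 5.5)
Your final argument is correct and follows the paper's route. You parametrize $C_{g_1}$ by trace and determinant, impose the linear condition $\Tr(yg_2)=x$, and count norm fibres via Lemma~\ref{Norm and trace maps}; for (1) this is exactly the paper's computation. Your reduction to $\N(u+\sqrt{\delta}\,\gamma')=D-s^2/(4\delta)$ supplies the non-split cases (2) and (3), which the paper skips as ``similar''. The identification you call the main obstacle is immediate: $y=g_1$ and $y=g_1^{\sigma}$ give $u=\gamma'=0$, and $\Tr(g_1^{\sigma}g_2)=\Tr(g_1g_2^{\sigma})$.

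Delete the abandoned swap in (3): it is invalid because $t(x;g_1,g_2)$ is not symmetric, and it yields $q-1$, as you found. Also drop the remark that the two traces in (1) may coincide, since they differ by $(c-d)(\lambda_1-\lambda_2)\neq 0$.
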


\begin{proof}
Let \(g_1 \in \mathrm{NS} \cup \mathrm{SS}\). 
Then
\[
J=
\begin{pmatrix}
A & B\\
C & D
\end{pmatrix} \in C_{g_1} \iff A+D = \mathrm{Tr}(g_1) \quad \text{and} \quad AD-BC = \det(g_1).
\]
Now, for each $x\in \F_q$, we calculate $t(x;g_1,g_2)$ which is 
the number of elements in $C_{g_1}\cdot g_2$ with trace  \(x\).
Equivalently, 
we need to solve the following system of equations in $A,B,C,D\in \F_q$,
\begin{equation}\label{main-eq}
A+D=\mathrm{Tr}(g_1),\quad
AD-BC=\det(g_1),\quad 
\mathrm{Tr}(Jg_2)=x.
\end{equation}
Therefore, 
\begin{equation}\label{count for J}
t(x;g_1,g_2) =|\{J\in C_{g_1}:\eqref{main-eq}\text{ holds}\}|.
\end{equation}
Using Lemma \ref{m(x) is independent of the representative g_2}, the number \( t(x;g_1,g_2)\) does not depend on the choice of representative for $C_{g_1}, C_{g_2}$. Therefore, we take suitable representatives $g_1$ and $g_2$.

First, we consider $g_1,g_2 \in \mathrm{SS}$.
Take $g_i=\begin{pmatrix}
    a_i & 0\\
    0 & d_i
\end{pmatrix} \in \GL_2(\F_{q})$ for $i=1,2$ with \(a_i \neq d_i\).
Then the Equations in \eqref{main-eq} are equivalent to
\begin{align*}
A =\dfrac{x-(a_1+d_1)d_2}{(a_2-d_2)},~D=\dfrac{a_2(a_1+d_1)-x}{(a_2-d_2)},
\end{align*}
\begin{equation} \label{BC=0}
BC=
\dfrac{(x-(a_1a_2+d_1d_2))(x-(a_1d_2+a_2d_1))}{(a_2-d_2)^2}.
\end{equation}
Now
\(
BC=0 \iff x\in \{a_1a_2+d_1d_2, a_1d_2+a_2d_1\},~i.e.~x\in \{ \mathrm{Tr}(g_1g_2),\mathrm{Tr}(g_1g_2^{\sigma})\}.
\)
Hence, using \eqref{count for J} together with Lemma \ref{Norm and trace maps}, we get 
\[
 t(x;g_1,g_2) =
|\{ (B,C): \eqref{BC=0} \text{ holds}\}|=
\begin{cases}
2q-1 & \text{if } x \in \{\mathrm{Tr}(g_1g_2),\, \mathrm{Tr}(g_1g_2^{\sigma})\},\\
q-1 & \text{otherwise}.
\end{cases}
\]
This completes the proof of part (1) of the proposition.
The proofs for parts (2) and (3) are similar and we skip writing.
\end{proof}

Now, we describe the multiplicities $m(C_g;g_1,g_2)$ of $C_{g}$ in 
\(
C_{g_1}\cdot C_{g_2}
\) for $g,g_1,g_2\in \NS$ and $g,g_1,g_2\in \mathrm{SS}$.
Let $E=\F_q\oplus \F_q$ or $E=\F_{q^2}$.
For $g_1,g_2\in E$, we define 
\[
K:= \{ g_1g_2,~g_1 g_2^{\sigma},~ g_1^{\sigma}g_2,~ g_1^{\sigma} g_2^{\sigma}\}.
\]

\begin{lemma}\label{lemma:no-scalars-mixed-types}
Let \(g_1,g_2\in G=\GL_2(\mathbb{F}_q)\) and 
\(X\in\{Z,\mathrm{U},\mathrm{SS},\mathrm{NS}\}\).
\begin{enumerate}
\item 
If 
\(
g_1\in X,~g_2\in G\smallsetminus X,
\)
then
\(
(C_{g_1}\cdot C_{g_2})\cap Z=\emptyset
\).
\item For \( \lambda I \in Z\) and \(C_{g_1}\) with \(g_1 \in X\), there exists a unique \(C_{g_2}\) with \(g_2 \in X\) such that \(\lambda I\in C_{g_1}\cdot C_{g_2}\).
\item If $\lambda I\in C_{g_1}\cdot C_{g_2}$, then $m(C_{\lambda I};g_1,g_2)=|C_{g_1}|=|C_{g_2}|$.
\end{enumerate}
\end{lemma}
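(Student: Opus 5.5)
The whole lemma rests on one observation: \(\lambda I \in C_{g_1}\cdot C_{g_2}\) means there are \(x\in C_{g_1}\) and \(y\in C_{g_2}\) with \(xy=\lambda I\), that is, \(y=\lambda x^{-1}\). So I will compare the type of \(\lambda x^{-1}\) with the type of \(x\), using the classification of Lemma \ref{conjugacy classes of GL2}.

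\textbf{Step 1: the map \(x\mapsto \lambda x^{-1}\) preserves each type.} I will show that for \(\lambda\in\F_q^\times\) this map sends each of \(Z,\mathrm U,\mathrm{SS},\mathrm{NS}\) into itself. Take the representatives \(aI\), \(\begin{pmatrix} a&1\\0&a\end{pmatrix}\), and elements of \(E^\times\smallsetminus\F_q^\times\). Then:
\begin{itemize}
\item \(\lambda (aI)^{-1}=\lambda a^{-1}I\) is scalar;
\item the inverse of a non-semisimple matrix is non-semisimple, since it has the same single eigenvalue and is still not diagonalizable;
\item for \(x\in E^\times\smallsetminus\F_q^\times\), we have \(\lambda x^{-1}\in E^\times\), and it is not in \(\F_q^\times\) since otherwise \(x\) would be.
\end{itemize}
Because conjugacy commutes with \(x\mapsto\lambda x^{-1}\), the type depends only on the class.

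\textbf{Step 2: part (1).} If \(xy=\lambda I\) with \(x\in C_{g_1}\subset X\), then \(y=\lambda x^{-1}\in X\) by Step 1. Hence \(C_{g_2}\) meets \(X\), so \(g_2\in X\), which contradicts \(g_2\in G\smallsetminus X\).

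\textbf{Step 3: part (2).} The condition \(\lambda I\in C_{g_1}\cdot C_{g_2}\) says exactly that \(C_{g_2}\) contains \(\lambda x^{-1}\) for some \(x\in C_{g_1}\). All such elements \(\lambda x^{-1}\) are conjugate to \(\lambda g_1^{-1}\). Therefore the unique class that works is \(C_{g_2}=C_{\lambda g_1^{-1}}\), and \(g_2\in X\) by Step 1.

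\textbf{Step 4: part (3).} The multiplicity of \(\lambda I\) is
\[
|\{(x,y)\in C_{g_1}\times C_{g_2}: xy=\lambda I\}|.
\]
Here \(y\) is determined by \(x\) via \(y=\lambda x^{-1}\), and by Step 3 every \(x\in C_{g_1}\) gives \(y\in C_{\lambda g_1^{-1}}=C_{g_2}\). So the multiplicity is \(|C_{g_1}|\). The bijection \(x\mapsto\lambda x^{-1}\) from \(C_{g_1}\) onto \(C_{g_2}\) then gives \(|C_{g_1}|=|C_{g_2}|\).

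The only real work is Step 1, and in particular the claim that the unipotent type is preserved. That is a direct check on the representative \(\begin{pmatrix} a&1\\0&a\end{pmatrix}\), whose inverse is \(\begin{pmatrix} a^{-1}&-a^{-2}\\0&a^{-1}\end{pmatrix}\), which is non-scalar with a repeated eigenvalue.
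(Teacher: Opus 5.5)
Your proof is correct. The paper states this lemma without proof, and your argument rests on the same observation \(\lambda I\in C_{g_1}\cdot C_{g_2}\iff \lambda g_1^{-1}\in C_{g_2}\) that the paper uses in the proof of Lemma \ref{scalar in split and non-semisimple matrices}, together with your checks that \(x\mapsto\lambda x^{-1}\) preserves each of \(Z,\mathrm U,\mathrm{SS},\mathrm{NS}\) and maps \(C_{g_1}\) bijectively onto \(C_{\lambda g_1^{-1}}\).
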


\begin{lemma}\label{scalar in split and non-semisimple matrices}
Let $g_1,g_2\in E^\times$.
Then 
\(
(C_{g_1}\cdot C_{g_2}) \cap Z \subset K.
\)
In particular, \(I \in C_{g_1} \cdot C_{g_2} \iff I \in K\).
    
\end{lemma}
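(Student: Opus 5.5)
Suppose \(\lambda I \in C_{g_1}\cdot C_{g_2}\). Then \(\lambda I = x y\) with \(x \in C_{g_1}\) and \(y \in C_{g_2}\), so \(y = \lambda x^{-1}\), and hence \(\lambda x^{-1} \in C_{g_2}\). Since \(x\) is conjugate to \(g_1\), the matrix \(\lambda x^{-1}\) is conjugate to \(\lambda g_1^{-1}\). Therefore \(C_{g_2} = C_{\lambda g_1^{-1}}\), and conversely this condition forces \(\lambda I \in C_{g_1}\cdot C_{g_2}\). So the plan is to reduce everything to this criterion,
\[
\lambda I\in C_{g_1}\cdot C_{g_2}\iff \lambda g_1^{-1}\in C_{g_2},
\]
and then to identify \(C_{g_2}\cap E^{\times}\) using Lemma \ref{exact two elements of a class in the embedding}.

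Fix \(g_1, g_2 \in E^{\times}\). Since \(E\) is commutative and \(\lambda\in\F_q^\times\subset E^\times\), the element \(\lambda g_1^{-1}\) lies in \(E^{\times}\). Hence \(\lambda g_1^{-1} \in C_{g_2}\) if and only if \(\lambda g_1^{-1} \in C_{g_2}\cap E^{\times}\). If \(g_2\) is not scalar, Lemma \ref{exact two elements of a class in the embedding} identifies this intersection as \(\{g_2, g_2^{\sigma}\}\). If \(g_2\) is scalar, the class is \(\{g_2\}=\{g_2,g_2^\sigma\}\), so the same description holds. Thus \(\lambda g_1^{-1} \in \{g_2, g_2^{\sigma}\}\), which means \(\lambda = g_1 g_2\) or \(\lambda = g_1 g_2^{\sigma}\). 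In either case \(\lambda I \in K\). This proves \((C_{g_1}\cdot C_{g_2})\cap Z\subset K\). The symmetric members \(g_1^\sigma g_2^\sigma\) and \(g_1^\sigma g_2\) are included automatically, since they are \(\sigma\)-images of the first two and \(\lambda\) is \(\sigma\)-fixed.

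For the ``in particular'' statement, the forward direction is the inclusion just proved with \(\lambda=1\). For the converse, suppose some element of \(K\) equals \(I\). Applying \(\sigma\) if necessary, we may assume \(g_1g_2 = I\) or \(g_1 g_2^{\sigma} = I\). Then \(I = g_1 g_2\) is a product of \(g_1\in C_{g_1}\) and \(g_2\in C_{g_2}\), or \(I = g_1 g_2^{\sigma}\) is a product of \(g_1\) and \(g_2^{\sigma}\in C_{g_2}\), using Lemma \ref{exact two elements of a class in the embedding} again. In both cases \(I\in C_{g_1}\cdot C_{g_2}\).

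The only delicate point is the applicability of Lemma \ref{exact two elements of a class in the embedding} when \(g_2\) is scalar, or when \(g_2\) is non-scalar but of a different type from \(E\) (for example, \(E=\F_{q^2}\) while the class is split). The first case is handled above. The second cannot occur, because every element of \(E^{\times}\) is already of the type determined by \(E\).
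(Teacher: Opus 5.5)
Your proof is correct and follows the paper's argument. You reduce to the criterion $\lambda g_1^{-1}\in C_{g_2}$, observe that $\lambda g_1^{-1}\in E^\times$, and use Lemma \ref{exact two elements of a class in the embedding} to force $\lambda g_1^{-1}\in\{g_2,g_2^{\sigma}\}$; your handling of the converse via $g_2^{\sigma}\in C_{g_2}$ and applying $\sigma$ is just an explicit version of the paper's closing appeal to $C_g=C_{g^{\sigma}}$.
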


\begin{proof}
It is clear that $\lambda I\in C_{g_1}\cdot C_{g_2}\iff \lambda g_1^{-1}\in C_{g_2}$. 
Since $g_1,g_2\in E^\times$, 
we get $\lambda g_1^{-1}\in E^\times \cap C_{g_2} = \{ g_{2}, ~g_{2}^{\sigma} \}$ by Lemma \ref{exact two elements of a class in the embedding}.
Now, the lemma follows from $C_{g}=C_{g^{\sigma}}$.
\end{proof}

\begin{proposition}\label{proposition 3.11-1,2,3}
    \begin{enumerate}
        \item Let \(E=\F_q\oplus \F_q\) and $g_1, g_2 \in E^\times\smallsetminus \F_{q}^\times$. 
Let $g \in \mathrm{SS}\cup K$ with $\det(g) = \det(g_1 g_2)$. 
Then
$$
m(C_g; g_1, g_2) = \begin{cases} 
 q^2+q & \text{if } g \in K\cap Z,\\
2q-1 & \text{if } g \in \mathrm{SS}\cap K,
\\
q-1 & \text{if } g \in \mathrm{SS}\smallsetminus K.
\end{cases}
$$
\item Let \(E=\F_{q^2}\) and \(g_1, g_2 \in  E^\times \smallsetminus \F_{q}^\times\).
Let $g \in \mathrm{NS}\cup K$ be such that $\det(g) = \det(g_1 g_2)$.
Then 
$$
m(C_g; g_1, g_2) = \begin{cases} 
q^2-q & \text{if } g \in K\cap Z,\\
1 & \text{if } g \in \mathrm{NS}\cap K,
\\
q+1 & \text{if } g \in \mathrm{NS}\smallsetminus K. 
\end{cases}
$$
\item Let \(g_1\in \mathrm{SS}\) and \(g_2\in \mathrm{NS} \).
Let $g \in \mathrm{NS}\cup \mathrm{SS}$ be such that $\det(g) = \det(g_1 g_2)$.
Then 
$$
m(C_g; g_1, g_2) = \begin{cases} 
q-1 & \text{if } g \in \mathrm{SS},
\\
q+1 & \text{if } g \in \mathrm{NS}. 
\end{cases}
$$
\end{enumerate}
\end{proposition}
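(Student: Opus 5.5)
We split the claim according to whether \(g\) is scalar or semisimple non-central, and use the earlier results for each.

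\emph{Non-central \(g\).} Here \(g\in \mathrm{SS}\cup\mathrm{NS}\), and we apply Lemma~\ref{formula count for semisimple elements} with \(h=g\) and \(x=\Tr(g)\). This reduces the problem to two inputs: the class sizes and the trace counts \(t(x;g_1,g_2)\) from Proposition~\ref{main theorem-1-4}. Recall \(|C_h|=q(q+1)\) for \(h\in\mathrm{SS}\) and \(|C_h|=q(q-1)\) for \(h\in\mathrm{NS}\).

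We need that \(C_g\) actually occurs in \(C_{g_1}\cdot C_{g_2}\) whenever \(\det g=\det(g_1g_2)\). This holds because the relevant \(t(x;g_1,g_2)\) is positive, so some element of trace \(x\) and the correct determinant lies in \(C_{g_1}\cdot C_{g_2}\). Since \(g\) is non-central, that element lies in \(C_g\), since trace and determinant determine a semisimple non-central class. We must check that it is not scalar or unipotent-type. When it could be, as for \(x\) the trace of an element of \(K\), that element is \(g\) itself, so the class with this trace is still determined.

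We also need to identify the condition \(g\in K\) with \(\Tr(g)\in\{\Tr(g_1g_2),\Tr(g_1g_2^\sigma)\}\). By definition \(K=\{g_1g_2,\,g_1g_2^\sigma,\,g_1^\sigma g_2,\,g_1^\sigma g_2^\sigma\}\), so membership is meant up to conjugacy. We use that \(\Tr(g_1^\sigma g_2^\sigma)=\Tr(g_1g_2)\) and that all elements of \(K\) have determinant \(\det(g_1g_2)\). So \(C_g\) meets \(K\) exactly when the trace condition holds. We interpret "\(g\in K\)" as "\(C_g\cap K\neq\emptyset\)".

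The three parts then follow from the arithmetic below.

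In part (1), \(g_2\in\mathrm{SS}\), so \(|C_{g_2}|/|C_g|=1\) and \(m\) equals \(t\), giving \(2q-1\) or \(q-1\).

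In part (2), \(g_2\in\mathrm{NS}\) and \(g\in\mathrm{NS}\), so the ratio is again \(1\), giving \(1\) or \(q+1\).

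In part (3), \(g_2\in\mathrm{NS}\), so \(|C_{g_2}|=q(q-1)\) and \(t=q+1\) throughout. For \(g\in\mathrm{NS}\) the ratio is \(1\), giving \(q+1\). For \(g\in\mathrm{SS}\) the ratio is \(\frac{q(q-1)}{q(q+1)}\), giving \(\frac{q-1}{q+1}(q+1)=q-1\).

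\emph{Scalar \(g\).} When \(g=\lambda I\in K\cap Z\), we use Lemma~\ref{lemma:no-scalars-mixed-types}(3), which gives \(m(C_{\lambda I};g_1,g_2)=|C_{g_1}|\). The membership \(\lambda I\in C_{g_1}\cdot C_{g_2}\) is guaranteed by Lemma~\ref{scalar in split and non-semisimple matrices} together with \(g\in K\). This gives \(q^2+q\) in the split case and \(q^2-q\) in the non-split case.

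The part of Lemma~\ref{lemma:no-scalars-mixed-types}(3) we need is easy to prove directly. The pairs \((x,y)\) with \(xy=\lambda I\) are exactly \((x,\lambda x^{-1})\) for \(x\in C_{g_1}\). Each such \(y\) lies in \(C_{g_2}\), because \(\lambda g_1^{-1}\in C_{g_2}\) and conjugation preserves this.

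\emph{Main obstacle.} The main obstacle is the bookkeeping that Lemma~\ref{formula count for semisimple elements} requires \(C_g\) to be the only class of the given trace and determinant inside the product. This can fail only when that trace and determinant also fit a scalar class \(\lambda I\) (trace \(2\lambda\), determinant \(\lambda^2\)) or a unipotent-type class, and such an element would be a repeated-eigenvalue element. For \(g\in\mathrm{SS}\cup\mathrm{NS}\), the pair \((\Tr g,\det g)\) has distinct eigenvalues, so no such coincidence arises. In that case the proof of the lemma goes through, since elements of trace \(x\) with the fixed determinant are all conjugate to \(g\).
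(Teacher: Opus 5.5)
Your proposal is correct and follows the paper's route: Lemma~\ref{formula count for semisimple elements} combined with the trace counts of Proposition~\ref{main theorem-1-4} for non-central \(g\), and Lemma~\ref{lemma:no-scalars-mixed-types}(3) for scalar \(g\in K\). You also make explicit two points the paper leaves tacit. First, a non-central \(g\) has distinct eigenvalues, so \(C_g\) is the unique class of its trace and determinant in the product. Second, ``\(g\in K\)'' has to be read up to conjugacy, which is equivalent to the trace condition \(\Tr(g)\in\{\Tr(g_1g_2),\Tr(g_1g_2^{\sigma})\}\).
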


\begin{proof}
The proposition follows from Lemma \ref{formula for multiplicity count}, Proposition \ref{main theorem-1-4} and Lemma \ref{lemma:no-scalars-mixed-types}.
\end{proof}

Recall that \([G]=Z\sqcup [\mathrm U]\sqcup [\mathrm{SS}]\sqcup [\mathrm{NS}]\).

\begin{definition}
Let \(G=\GL_2(\F_q)\) and $E=\F_{q}\oplus \F_q$ or $E=\F_{q^2}$. 
For \(g_1,g_2\in E^\times\smallsetminus \F_{q}^\times\), we define the following.
\begin{enumerate}
\item 
\(K:=\{g_1g_2,\,g_1^{\sigma}g_2,\,g_1g_2^{\sigma},\,g_1^{\sigma}g_2^{\sigma}\}\)
and 
\(
\mathbf{K}:=\{[g_1g_2]_{\sigma}, [g_1g_2^{\sigma}]_{\sigma}\}.
\)
\item 
\(\mathbf{SS}  :=\{g\in [\mathrm{SS}] \mid \det(g)=\det(g_1g_2)\}\).
\item 
\(\mathbf{NS} :=\{g\in [\mathrm{NS}] \mid \det(g)=\det(g_1g_2)
\}\). 
\end{enumerate}
\end{definition}
Using Proposition \ref{proposition 3.11-1,2,3}, we get the following corollary.

\begin{corollary}\label{main theorem-***}
\begin{enumerate}
\item Let \(E = \F_{q} \oplus \F_{q}\) and $g_1,g_2\in E^\times \smallsetminus\F_{q}^\times$.
Then
\[
\begin{aligned}
C_{g_1}\cdot C_{g_2}
=& \bigsqcup_{\substack{
g \in \mathbf{K} \cap Z}} (q^2+q)C_{g} 
\bigsqcup_{\substack{
g \in \mathbf{K} \cap \mathbf{SS}
}}(2q-1)C_g
&
\bigsqcup_{\substack{
g\in\mathbf{SS}\smallsetminus \mathbf{K}
}}
(q-1)C_g &
\bigsqcup_{\substack{
g\in [G]\smallsetminus (\mathbf{SS} \cup \mathbf{K})
}}
m(C_{g}; g_1, g_2)C_g.
\end{aligned}
\]
\item Let \(E= \F_{q^2}\) and $g_1,g_2\in E^\times \smallsetminus\F_{q}^\times$.
Then
\[
\begin{aligned}
C_{g_1}\cdot C_{g_2}
=& \bigsqcup_{\substack{
g \in \mathbf{K} \cap Z}} (q^2-q)C_{g} 
\bigsqcup_{\substack{
g \in \mathbf{K} \cap \mathbf{NS}
}}C_g
&
\bigsqcup_{\substack{
g\in\mathbf{NS}\smallsetminus \mathbf{K}
}}
(q+1)C_g &
\bigsqcup_{\substack{
g\in [G] \smallsetminus (\mathbf{NS} \cup \mathbf{K})
}}
m(C_{g}; g_1, g_2)C_g.
\end{aligned}
\]
\item Let \(g_1\in \mathrm{SS}\) and \(g_2\in \mathrm{NS}\).
Then
\[
\begin{aligned}
C_{g_1}\cdot C_{g_2}
= &
\bigsqcup_{\substack{
g\in \mathbf{SS}
}}(q-1)C_g
\bigsqcup_{\substack{
g\in\mathbf{NS}
}}
(q+1)C_g\bigsqcup_{\substack{
g\in [G] \smallsetminus (\mathbf{NS}\cup \mathbf{SS})
}}
m(C_{g}; g_1, g_2)C_g.
\end{aligned}
\]
\end{enumerate}
\end{corollary}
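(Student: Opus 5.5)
The plan is to read off Corollary \ref{main theorem-***} from Proposition \ref{proposition 3.11-1,2,3}. The only extra work is to turn statements about the representatives $g\in K$, $\mathrm{SS}$, $\mathrm{NS}$ into statements about classes in $[G]$, and to check that no class is counted twice.

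First I would sort the conjugacy classes appearing in $C_{g_1}\cdot C_{g_2}$ by determinant. Every $g$ in the product satisfies $\det(g)=\det(g_1g_2)$, so any semisimple non-central class that occurs lies in $\mathbf{SS}$ or $\mathbf{NS}$. By Lemma \ref{conjugacy classes of GL2}, such a class is represented by a unique $\sigma$-orbit $[x]_\sigma$.

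Next I would identify $\mathbf K$. Since $C_g=C_{g^\sigma}$ and $(g_1g_2)^\sigma=g_1^\sigma g_2^\sigma$, the four elements of $K$ give at most the two classes $[g_1g_2]_\sigma$ and $[g_1g_2^\sigma]_\sigma$, which are the elements of $\mathbf K$. Each element of $\mathbf K$ is either central or lies in $\mathbf{SS}$ (respectively $\mathbf{NS}$). This gives the disjoint decomposition
\[
\mathbf K\cup\mathbf{SS}=(\mathbf K\cap Z)\sqcup(\mathbf K\cap\mathbf{SS})\sqcup(\mathbf{SS}\smallsetminus\mathbf K),
\]
and the analogous one with $\mathbf{NS}$.

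Then I would assign multiplicities from Proposition \ref{proposition 3.11-1,2,3}. In case (1), a class in $\mathbf{SS}$ has a representative in $K$ exactly when it belongs to $\mathbf K$, since membership in $K$ is checked through the $\sigma$-orbit. So Proposition \ref{proposition 3.11-1,2,3}(1) assigns multiplicity $2q-1$ to classes in $\mathbf K\cap\mathbf{SS}$, multiplicity $q-1$ to classes in $\mathbf{SS}\smallsetminus\mathbf K$, and multiplicity $q^2+q$ to central classes in $\mathbf K$. By Lemma \ref{scalar in split and non-semisimple matrices}, every central class in the product lies in $K$, so no other scalars appear with nonzero multiplicity. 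Every remaining class of $[G]$ is collected in the last term with its own multiplicity $m(C_g;g_1,g_2)$, which may be zero. Case (2) is identical, using Proposition \ref{proposition 3.11-1,2,3}(2) to get the multiplicities $q^2-q$, $1$ and $q+1$. Case (3) uses Proposition \ref{proposition 3.11-1,2,3}(3) directly. No $K$-term is needed there, because Lemma \ref{lemma:no-scalars-mixed-types}(1) shows that no scalar occurs when one factor is split and the other non-split.

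The main obstacle is bookkeeping when the two elements of $\mathbf K$ coincide, that is, when $g_1g_2$ and $g_1g_2^\sigma$ lie in the same $\sigma$-orbit. The sets $\mathbf K$ and $\mathbf{SS}$ are sets of classes, so a repeated class is listed once and receives the multiplicity the proposition gives for an element of $K$. The proposition computes that multiplicity through $t(x;g_1,g_2)$ for the trace value $x$, so it is the same however many elements of $K$ represent the class, and the value is not doubled. I would therefore check that Proposition \ref{main theorem-1-4} gives $t=2q-1$ (respectively $1$) once the two trace values $\mathrm{Tr}(g_1g_2)$ and $\mathrm{Tr}(g_1g_2^\sigma)$ coincide. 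If it does, the listed multiplicities are consistent and the union is disjoint as written.
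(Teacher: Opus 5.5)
Your proposal is correct and is the paper's argument: the corollary is read off from Proposition~\ref{proposition 3.11-1,2,3}, with membership in $K$ interpreted at the level of classes (that is, via $\mathbf K$), exactly as you do. The coincidence you single out as the main obstacle never occurs, because $g_1,g_2\notin\F_q^\times$ gives $\Tr(g_1g_2)-\Tr(g_1g_2^{\sigma})=(g_1-g_1^{\sigma})(g_2-g_2^{\sigma})\neq 0$; hence the two classes in $\mathbf K$ are always distinct and that extra check is unnecessary.
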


\section{Bijection between $\Irr(G)$ and $\Conj(G)$}\label{sec:6}
In this section, we will define a bijection
\[
\Psi:\Irr(G)\longrightarrow \Conj(G)
\]
for \(G=\GL_2(\F_q)\),
which will allow us to relate the  trilinear forms on representations of \(G\) to the triple product of the corresponding conjugacy classes in \(G\).
Recall the embedding \(E \hookrightarrow M_2(\F_q)\) as in Section \ref{sec:2} for $E=\F_{q}\oplus \F_q$ or $E=\F_{q^2}$ with \(\F_{q} \hookrightarrow E\).
For any finite commutative group \(A\), let \(\widehat{A}\) denote the dual group, i.e. the group of all characters of \(A\).
Since \(\F_{q}^{\times}\) and \(E^{\times}\) are finite groups, we have \(\widehat{\F_{q}^{\times}} \cong \F_{q}^{\times}\) and \(\widehat{E^{\times}} \cong E^{\times}\), but there is no natural isomorphism.
Now we fix isomorphisms \( \widehat{\F_{q}^{\times}} \to \F_{q}^{\times}\) and \( \widehat{E^{\times}} \to E^{\times}\) in a compatible way as in the following lemma.

\begin{lemma}\label{lemma for extension of isomporhism}
Let \(\theta : \widehat{\F_{q}^{\times}} \to \F_{q}^{\times}\) be an isomorphism. 
Then there exists an isomorphism \( \Theta : \widehat{E^{\times}} \to E^{\times}\)
such that for any \( \Lambda \in \widehat{E^{\times}}\) we have \( \theta(\Lambda|_{\F_{q}^{\times}}) = \N(\Theta(\Lambda)) \).
\end{lemma}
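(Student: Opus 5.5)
The plan is to dualize the norm map. The inclusion \(\iota:\F_q^\times\hookrightarrow E^\times\) induces the restriction map \(\Res:\widehat{E^\times}\to\widehat{\F_q^\times}\), \(\Lambda\mapsto\Lambda|_{\F_q^\times}\). The statement asks for an isomorphism \(\Theta\) making the square
\[
\begin{array}{ccc}
\widehat{E^\times} & \xrightarrow{\ \Theta\ } & E^\times\\
\downarrow{\scriptstyle \Res} & & \downarrow{\scriptstyle \N}\\
\widehat{\F_q^\times} & \xrightarrow{\ \theta\ } & \F_q^\times
\end{array}
\]
commute. Both vertical maps are surjective homomorphisms. \(\N\) is surjective by Lemma \ref{Norm and trace maps}. \(\Res\) is surjective because characters of a subgroup of a finite abelian group extend. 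Their kernels both have order \(|E^\times|/(q-1)\), which is \(q+1\) for \(E=\F_{q^2}\) and \(q-1\) for \(E=\F_q\oplus\F_q\).

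\textbf{Case \(E=\F_{q^2}\).} Here \(E^\times\) is cyclic of order \(q^2-1\). Fix a generator \(\zeta\) of \(E^\times\) and a generator \(\chi\) of \(\widehat{E^\times}\). Then \(\N(\zeta)=\zeta^{q+1}\) generates \(\F_q^\times\), and \(\chi|_{\F_q^\times}\) generates \(\widehat{\F_q^\times}\), since \(\Res\) is surjective and the image of a generator generates.

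Hence \(\theta(\chi|_{\F_q^\times})=\N(\zeta)^k\) for some \(k\) coprime to \(q-1\). I want to replace \(\zeta\) by a generator \(\zeta'=\zeta^{m}\) with \(m\equiv k \pmod{q-1}\) and \(\gcd(m,q^2-1)=1\). Such an \(m\) exists because the natural map \((\mathbb Z/(q^2-1))^\times\to(\mathbb Z/(q-1))^\times\) is surjective. Then define \(\Theta(\chi^j)=\zeta'^j\), which is an isomorphism.

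Commutativity then follows from the chain
\[
\N(\Theta(\chi^j))=\N(\zeta')^j=\N(\zeta)^{kj}=\theta(\chi|_{\F_q^\times})^j=\theta(\chi^j|_{\F_q^\times}).
\]

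\textbf{Case \(E=\F_q\oplus\F_q\).} Here \(E^\times=\F_q^\times\times\F_q^\times\) and \(\N(a,b)=ab\). Every character has the form \(\Lambda=\lambda_1\otimes\lambda_2\), with \(\Lambda(a,b)=\lambda_1(a)\lambda_2(b)\). Its restriction to the diagonal \(\F_q^\times\) is \(\lambda_1\lambda_2\).

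Define \(\Theta(\lambda_1\otimes\lambda_2)=(\theta(\lambda_1),\theta(\lambda_2))\). This is an isomorphism, being a product of isomorphisms. Moreover
\[
\N(\Theta(\Lambda))=\theta(\lambda_1)\theta(\lambda_2)=\theta(\lambda_1\lambda_2)=\theta(\Lambda|_{\F_q^\times}).
\]

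The only nontrivial step is the lifting of units in the cyclic case. It can be handled as follows. Given \(k\) coprime to \(q-1\), choose \(m=k+t(q-1)\) by a Dirichlet or CRT argument on the primes dividing \(q+1\) but not \(q-1\). For primes dividing both \(q+1\) and \(q-1\), namely \(p=2\), \(m\) is automatically odd because \(k\) is.

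Alternatively, one can avoid this step abstractly. Fix an arbitrary isomorphism \(\Theta_0\). Then \(\theta^{-1}\circ\N\circ\Theta_0\) and \(\Res\) are two surjections of a cyclic group onto the same cyclic quotient, so they differ by an automorphism \(\alpha\) of \(\widehat{E^\times}\); that is, \(\theta^{-1}\circ\N\circ\Theta_0\circ\alpha=\Res\). Such an \(\alpha\) exists because the quotient maps have the same kernel, which is the unique subgroup of that order. Setting \(\Theta=\Theta_0\circ\alpha\) gives the required commutativity.
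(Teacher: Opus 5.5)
Your argument is correct and is the paper's construction: coordinatewise $\theta$ when $E=\F_q\oplus\F_q$, and, when $E=\F_{q^2}$, sending a generator $\chi$ of $\widehat{E^\times}$ to a generator $a$ of $E^\times$ with $\N(a)=\theta(\chi|_{\F_q^\times})$. You also supply the justification the paper only asserts, namely that such a generator exists because $(\mathbb{Z}/(q^2-1)\mathbb{Z})^\times\to(\mathbb{Z}/(q-1)\mathbb{Z})^\times$ is surjective.

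One caveat: your closing ``alternative'' does not actually avoid this step. Equal kernels only show that the two surjections differ by an automorphism of the target $\widehat{\F_q^\times}$, and realizing that as precomposition with an automorphism $\alpha$ of $\widehat{E^\times}$ requires the same lifting of units.
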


\begin{proof}
First, let $E=\F_q\oplus \F_q$. Then \(\widehat{E^{\times}} \cong \widehat{\F_{q}^{\times}} \times \widehat{\F_{q}^{\times}}\) and \(\Theta(\chi_1, \chi_2) = (\theta(\chi_1), \theta(\chi_2))\) defines an isomorphism from \( \widehat{E^{\times}} \rightarrow E^{\times}\). 
Clearly, for \(\Lambda = (\chi_1, \chi_2) \in \widehat{E^{\times}}\), we have \(\Lambda|_{\F_{q}^{\times}} = \chi_1 \chi_2\)
and 
\[
\N(\Theta(\Lambda))
=\theta(\chi_1)\theta(\chi_2)=\theta(\chi_1\chi_2)
=\theta(\Lambda|_{\F_q^\times}).
\]
Now, let $E=\F_{q^2}$.
Then $E^\times$ and $\widehat{E^\times}$ are cyclic groups of order \(q^2-1\). 
If $\widehat{E^\times} = \langle \chi \rangle$ then $\widehat{\F_{q}^\times} = \langle \chi|_{\F_{q}^{\times}} \rangle$ and \( \F_{q}^{\times} = \langle \theta(\chi|_{\F_{q}^{\times}})\rangle\). 
Since \(\N : E^{\times} \to \F_{q}^{\times} \) is surjective, let \(\N(a) = \theta(\chi|_{\F_{q}^{\times}})\) for some \(a \in E^{\times}\). 
However one can chose a suitable \(a \in E^{\times}\) such that \(E^{\times} = \langle a \rangle\) and \(\N(a) = \theta(\chi|_{\F_{q}^{\times}})\).
Now define \( \Theta(\chi)=a\) and extend it to a homomorphism from \(\widehat{E^{\times}} \to E^{\times}\). 
It can be easily checked that \(
\Theta\) satisfies the property stated in the lemma.
\end{proof}

In other words, the lemma says that the following diagram commutes 
\[
\begin{tikzcd}[column sep=huge, row sep=large]
\widehat{E^\times}
    \arrow[r, "\operatorname{Res}"]
    \arrow[d, "\Theta"', "\sim"]
&
\widehat{\mathbb F_q^\times}
    \arrow[d, "\sim"', "\theta"]
\\
E^\times
    \arrow[r, "\N"']
&
\F_{q}^\times
\arrow[ul, phantom, "\circlearrowleft" description]
\end{tikzcd}
\]
where $\Res$ is the restriction map from $\widehat{E^\times}$ to $\widehat{\F_{q}^\times}$.
Since $E$ is either $\F_{q^2}$ or $\F_q\oplus\F_q$, we have that ${\rm Aut}_{\F_{q}}(E)=\langle \sigma \rangle$ is a group of order two. 
Here $\sigma$ is the non-trivial automorphism of $E$ which fixes $\F_q$.
The action of $\sigma$ on $E^{\times}$ 
induces an action on $\widehat{E^\times}$ given by
\(
\sigma(\Lambda)=\Lambda^{\sigma}:=\Lambda\circ \sigma,~\Lambda\in \widehat{E^\times}.
\)
Moreover, we have \(\Theta(\Lambda^{\sigma}) = \Theta(\Lambda)^{\sigma}\).

\begin{definition}
Let $E=\F_{q}\oplus \F_q$ or $E=\F_{q^2}$.
\begin{enumerate}
\item Define an equivalence relation \(\sim \) on $\widehat{E^\times}$ by
\( \Lambda\sim\Lambda' \)
if
\( \Lambda'=\Lambda \text{ or } \Lambda'=\Lambda^\sigma \).
\item A character \( \Lambda \in \widehat{E^\times} \) is called decomposable if \( \Lambda = \chi \circ \mathrm{N}\) for some \( \chi \in \widehat{\F_{q}^{\times}}\).  
\item A character \( \Lambda \in \widehat{E^\times} \) is called indecomposable if it is not decomposable.
\end{enumerate}
\end{definition}

\begin{lemma}\label{lemma relating non-split class and cuspidal representations}
Let $E$ be either $\F_q\oplus\F_q$ or $\F_{q^2}$.
Then
\(\Lambda \in \widehat{E^{\times}}\) is decomposable if and only if \(\Theta(\Lambda) \in \F_q^\times\).
\end{lemma}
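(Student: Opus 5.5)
The plan is to compare the number of decomposable characters with the number of elements of \(\F_q^\times\) inside \(E^\times\), and then upgrade that count to an equality of subgroups using the defining property of \(\Theta\) from Lemma~\ref{lemma for extension of isomporhism}. Everything rests on identifying the decomposable characters of \(E^\times\) with the characters that are trivial on a certain subgroup.

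First we describe the decomposable characters. The norm map \(\N:E^\times\to\F_q^\times\) is surjective in both cases: for \(E=\F_{q^2}\) this is standard, and for \(E=\F_q\oplus\F_q\) one has \(\N(x,y)=xy\). Surjectivity makes \(\chi\mapsto\chi\circ\N\) injective on \(\widehat{\F_q^\times}\). Its image is therefore a subgroup \(D\subset\widehat{E^\times}\) of order \(q-1\), consisting of the characters trivial on \(\ker\N\). Equivalently, \(D\) is the annihilator of \(\ker \N\).

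Next we describe \(\F_q^\times\) inside \(E^\times\) in a parallel way. For \(z\in E^\times\) we claim \(z\in\F_q^\times\) if and only if \(z^\sigma=z\). We also note \(\{z: z^\sigma = z\} = \{ w^{\sigma}/w\}^{\perp}\)-type descriptions are not needed. The natural route is through \(\sigma\)-invariance. One checks that \(\Lambda\) is decomposable if and only if \(\Lambda^\sigma=\Lambda\). The forward direction holds since \(\N\circ\sigma=\N\). For the converse, \(\Lambda^\sigma=\Lambda\) means \(\Lambda\) is trivial on \(\{z^\sigma z^{-1}\}\). By Hilbert 90 (cyclic case), or by direct computation in the split case where \(z^{\sigma}/z=(y/x,x/y)\), this set equals \(\ker\N\). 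Hence \(\Lambda\) factors through \(\N\).

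Finally, since \(\Theta(\Lambda^\sigma)=\Theta(\Lambda)^\sigma\) and \(\Theta\) is a bijection, we get \(\Lambda^\sigma=\Lambda\iff\Theta(\Lambda)^\sigma=\Theta(\Lambda)\iff\Theta(\Lambda)\in E^\sigma=\F_q\). This proves the lemma.

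The main point needing care is the equivariance \(\Theta(\Lambda^\sigma)=\Theta(\Lambda)^\sigma\), which the paper asserts but which does not follow merely from the compatibility with \(\N\).

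In the split case it is immediate from \(\Theta(\chi_1,\chi_2)=(\theta(\chi_1),\theta(\chi_2))\).

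In the cyclic case \(\sigma\) acts on both \(E^\times\) and \(\widehat{E^\times}\) by raising to the \(q\)-th power. Any homomorphism between cyclic groups commutes with this power map, so equivariance holds automatically.

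I would record this as a short preliminary remark and then run the argument above. As an alternative that avoids \(\sigma\) entirely, one can argue as follows. If \(\Lambda=\chi\circ\N\), then \(\Lambda|_{\F_q^\times}=\chi^2\). Counting then shows that \(\Theta^{-1}(\F_q^\times)\) and \(D\) are both the unique subgroup of order \(q-1\) in the cyclic case. In the split case \(\Theta^{-1}(\F_q^{\times})=\{(\chi,\chi)\}=D\) directly.
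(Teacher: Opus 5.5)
Your proof is correct and follows the paper's: you show decomposable \(\iff\) \(\Lambda^\sigma=\Lambda\) using \(\N\circ\sigma=\N\) and Hilbert 90 (which you rightly check by hand in the split case), then transport along \(\Theta(\Lambda^\sigma)=\Theta(\Lambda)^\sigma\) to get \(\Theta(\Lambda)\in (E^\times)^\sigma=\F_q^\times\). Your justification of that equivariance, which the paper only asserts, is a worthwhile addition: in the cyclic case it holds for any homomorphism, since \(\sigma\) is the \(q\)-th power map on both sides, but in the split case it genuinely needs the explicit formula \(\Theta(\chi_1,\chi_2)=(\theta(\chi_1),\theta(\chi_2))\) and not just compatibility with \(\N\); you should also simply delete the garbled sentence about ``\(\{w^\sigma/w\}^\perp\)-type descriptions''.
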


\begin{proof}
Let \(\Lambda  \in \widehat{E^{\times}}\) be  decomposable, i.e, \(\Lambda = \chi \circ \N \) for some \(\chi \in \widehat{\F_{q}^{\times}}\).
Then 
\(\Lambda^{\sigma} = \Lambda\) and hence \(\Theta(\Lambda)^{\sigma} = \Theta(\Lambda)\), i.e. \(\Theta(\Lambda) \in \F_{q}^{\times}\).
Conversely, let 
$\Theta(\Lambda)\in \F_{q}^\times$. 
Then $\Theta(\Lambda)=\Theta(\Lambda)^{\sigma}=\Theta(\Lambda^{\sigma})$.
Since $\Theta$ is an isomorphism, we get $\Lambda=\Lambda^{\sigma}$.
By Hilbert's Theorem 90, if $x \in \ker(\N)$ then \( x=y\sigma(y)^{-1} \) for some \(y \in E^{\times}\).
Then $\Lambda(x)=\Lambda(y\sigma(y)^{-1})=\Lambda(y)\Lambda^{\sigma}(y)^{-1} =1$ as $\Lambda^{\sigma}=\Lambda$.
This gives
\(
\ker(\N)\subset \ker(\Lambda)
\)
and hence $\Lambda$ factors through the norm map, i.e.
\( \Lambda=\chi\circ \N \) for some $\chi\in \widehat{\F_{q}^\times}$.
This proves the lemma.
\end{proof}

Now we summarize the irreducible representations of \(\GL_2(\F_q)\)  in the following proposition, see \cite[Section 5.2]{Fulton-Harris} for more details.

\begin{proposition}
\label{classification of irreducible representations of G}
If \(\pi \in \Irr(\GL_2(\F_q)) \) then \(\dim (\pi) \in \{ 1, q, q+1, q-1 \}\).
\begin{enumerate}
\item The 1-dimensional representations are  \( \chi^{1}_{\alpha} :=\alpha \circ \det\) for some \(\alpha \in \widehat{\F_{q}^{\times}}\).
\item The $q$-dimensional representations are  \(\chi_{\alpha}^{q} := \mathrm{St} \otimes \chi^{1}_{\alpha}\) for some \(\alpha \in \widehat{\F_{q}^{\times}}\), where \( \mathrm{St}\) is the Steinberg representation of \(\GL_2(\F_q)\).
\item The $(q+1)$-dimensional representations are \( \chi^{q+1}_{\Lambda} := \Ind_{B}^{G}(\Lambda)\) the irreducible principal series representations associated to an indecomposable \( \Lambda \in \widehat{E^{\times}}\) with \(E= \F_{q} \oplus \F_q\). Moreover, \(\chi^{q+1}_{\Lambda_1} \cong \chi^{q+1}_{\Lambda_2} \iff \Lambda_1 \sim \Lambda_2\). 
\item The $(q-1)$-dimensional representations are  \( \chi^{q-1}_{\Lambda}\) the irreducible cuspidal representations associated to indecomposable \( \Lambda \in \widehat{E^{\times}}\) with \(E= \F_{q^2}\).
Moreover, \(\chi^{q-1}_{\Lambda_1} \cong \chi^{q-1}_{\Lambda_2} \iff \Lambda_1 \sim \Lambda_2\). 
\end{enumerate}
\end{proposition}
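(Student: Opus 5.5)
This is the standard classification of \(\Irr(\GL_2(\F_q))\), and I would prove it by constructing enough irreducible characters and then checking completeness by counting. \textbf{Step 1: one-dimensional and Steinberg representations.} The abelianization of \(G=\GL_2(\F_q)\) is \(\F_q^\times\) via \(\det\), since \([G,G]=\mathrm{SL}_2(\F_q)\) for \(q>3\), and this can be checked directly for \(q=3\). This gives the \(q-1\) characters \(\chi^1_\alpha\). Next, \(\Ind_B^G(\mathbb{1})\) is the permutation representation on \(\mathbb{P}^1(\F_q)\). Because \(G\) acts doubly transitively on \(\mathbb{P}^1(\F_q)\), this representation is \(\mathbb{1}\oplus \mathrm{St}\) with \(\mathrm{St}\) irreducible of dimension \(q\). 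Tensoring with \(\chi^1_\alpha\) then produces \(q-1\) distinct \(q\)-dimensional irreducibles \(\chi^q_\alpha\); they are distinguished by their central characters on \(\mathrm{SS}\) together with their determinant twists.

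\textbf{Step 2: principal series.} Let \(E=\F_q\oplus\F_q\) and \(\Lambda=(\chi_1,\chi_2)\), viewed as a character of \(B\) through the diagonal torus. I would apply Mackey's formula with the Bruhat decomposition \(G=B\sqcup BwB\). This gives \(\dim\Hom_G(\Ind\Lambda,\Ind\Lambda')=[\Lambda=\Lambda']+[\Lambda'=\Lambda^\sigma]\). Consequently \(\Ind_B^G\Lambda\) is irreducible exactly when \(\Lambda\neq\Lambda^\sigma\), which by Lemma~\ref{lemma relating non-split class and cuspidal representations} (or directly) means exactly when \(\Lambda\) is indecomposable. The same formula shows \(\chi^{q+1}_{\Lambda_1}\cong\chi^{q+1}_{\Lambda_2}\) if and only if \(\Lambda_1\sim\Lambda_2\). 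This produces \(\tfrac{(q-1)^2-(q-1)}{2}\) representations of dimension \(q+1\).

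\textbf{Step 3: cuspidals, the main obstacle.} For indecomposable \(\Lambda\in\widehat{\F_{q^2}^\times}\), I would define the virtual character \(\chi^{q-1}_\Lambda\) explicitly on the classes of Lemma~\ref{conjugacy classes of GL2}:
\begin{itemize}
\item \((q-1)\Lambda(a)\) on \(aI\);
\item \(-\Lambda(a)\) on the unipotent class of \(a\);
\item \(0\) on \(\mathrm{SS}\);
\item \(-(\Lambda(z)+\Lambda(z^\sigma))\) on the class of \(z\in\mathrm{NS}\).
\end{itemize}
Equivalently, it is \(\Ind_{ZN}^G(\Lambda\psi)-\Ind_{E^\times}^G(\Lambda)\) for a nontrivial additive character \(\psi\) of the unipotent radical \(N\). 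The hard verification is the inner product \(\langle\chi_\Lambda,\chi_\Lambda\rangle=1\) together with a positive degree, which makes \(\chi_\Lambda\) a genuine irreducible character. This computation uses the class sizes \(1,\ q^2-1,\ q^2+q,\ q^2-q\), the fact that \(\Lambda\neq\Lambda^\sigma\), and the orthogonality \(\sum_{z\in E^\times}\Lambda(z)\overline{\Lambda^\sigma(z)}=0\). The same calculation gives \(\langle\chi_\Lambda,\chi_{\Lambda'}\rangle=0\) unless \(\Lambda'\sim\Lambda\). This yields \(\tfrac{(q^2-1)-(q-1)}{2}=\tfrac{q^2-q}{2}\) representations of dimension \(q-1\).

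\textbf{Step 4: completeness.} The four families are pairwise non-isomorphic, since their dimensions differ. Their total count is \(2(q-1)+\tfrac{(q-1)(q-2)}{2}+\tfrac{q(q-1)}{2}=q^2-1\), which equals \(|\Conj(G)|\) by Lemma~\ref{conjugacy classes of GL2}. As a check, \(\sum(\dim)^2=(q-1)(1+q^2)+\tfrac{(q-1)(q-2)}{2}(q+1)^2+\tfrac{q(q-1)}{2}(q-1)^2=|G|\). Hence the list is complete, and every \(\pi\in\Irr(G)\) has dimension in \(\{1,q,q+1,q-1\}\).
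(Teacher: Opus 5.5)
Your proposal is correct and is essentially the standard argument that the paper simply cites from Fulton--Harris, \S5.2: build the four families, verify irreducibility and pairwise distinctness by inner products, and conclude by counting against the \(q^2-1\) conjugacy classes. Your use of Mackey's formula, and of the realization \(\chi^{q-1}_\Lambda=\Ind_{ZN}^G(\Lambda\psi)-\Ind_{E^\times}^G(\Lambda)\) (which is what makes \(\chi^{q-1}_\Lambda\) a virtual character, so that the norm-one test applies), are routine variants of their direct character computations.

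One small fix: central characters do not separate the \(\chi^q_\alpha\), since the central character of \(\chi^q_\alpha\) is \(\alpha^2\) and \(q\) is odd. However, \(\chi^q_\alpha(\mathrm{diag}(a,1))=\alpha(a)\) because \(\mathrm{St}\) takes the value \(1\) there, and this does separate them.
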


\begin{definition}\label{definition of Psi map}
Let $E$ be either $\F_{q}\oplus \F_q$ or $\F_{q^2}$.
We fix $\theta : \widehat{\F_{q}^{\times}} \rightarrow \F_{q}^{\times}$ and  $\Theta : \widehat{E^\times} \rightarrow E^{\times}$ as in Lemma \ref{lemma for extension of isomporhism}.
For \(G = \GL_2(\F_q)\), define 
\begin{align}\label{definition  of Psi}
\Psi:\Irr(G)\longrightarrow \Conj(G)
\end{align}
as follows:
\begin{enumerate}
\item  \( \Psi\!\left(\chi_{\alpha}^{\,1}\right) = C_{\theta(\alpha)}\) for $\alpha \in \widehat{\F_{q}^{\times}}$.
\item  \( \Psi\!\left(\chi_{\alpha}^{\,q}\right) = C_{\theta(\alpha)u}\) for $\alpha \in \widehat{\F_{q}^{\times}}$ and \(u = \begin{pmatrix}
1&1\\0&1    
\end{pmatrix}\).
\item  \(\Psi (\chi^{q+1}_{\Lambda}) = C_{\Theta(\Lambda)}\) for indecomposable $\Lambda \in \widehat{E^{\times}}$ with \(E= \F_q \oplus \F_q\).
\item  \(\Psi (\chi^{q-1}_{\Lambda}) = C_{\Theta(\Lambda)}\) for indecomposable $\Lambda \in \widehat{E^{\times}}$ with \(E= \F_{q^2} \).
\end{enumerate}
\end{definition}

\section{Triple products and Trilinear forms} \label{Sec-5}

\subsection{Triple product of conjugacy classes}
For $g_1, g_2, g_3 \in G$, in this subsection, we compute 
\(
m\bigl(I,C_{g_1}\cdot C_{g_2}\cdot C_{g_3}\bigr)
\) the multiplicity of the identity matrix $I$ in the triple product
$C_{g_1}\cdot C_{g_2}\cdot C_{g_3}$ of conjugacy classes, viewed as a multi set.

\begin{lemma}\label{existence of I in triple product always}
Let \(g_1, g_2, g_3 \in \mathrm{SS} \cup \NS\) be such that \(\det(g_1 g_2 g_3)=1\). Then \(I \in C_{g_1}\cdot C_{g_2}\cdot C_{g_3}\).  
\end{lemma}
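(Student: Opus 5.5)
The plan is to reduce the statement to a question about the product of two classes. Observe that
\[
I \in C_{g_1}\cdot C_{g_2}\cdot C_{g_3} \iff g_3^{-1}\text{ is conjugate to some element of } C_{g_1}\cdot C_{g_2},
\]
since \(x_1x_2x_3=I\) means \(x_3^{-1}=x_1x_2\), and conjugates of \(x_3^{-1}\) range over \(C_{g_3^{-1}}\). Put \(h=g_3^{-1}\). Then \(h\in \mathrm{SS}\cup\mathrm{NS}\), and the hypothesis \(\det(g_1g_2g_3)=1\) gives \(\det(h)=\det(g_1g_2)\). Hence it suffices to show that \(C_h\subset C_{g_1}\cdot C_{g_2}\), i.e. that \(m(C_h;g_1,g_2)\ge 1\).

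The key tool is Lemma \ref{formula count for semisimple elements}, together with Proposition \ref{main theorem-1-4}. Take \(x=\Tr(h)\). Proposition \ref{main theorem-1-4} gives \(t(x;g_1,g_2)\ge 1\) for every \(x\in\F_q\) in all three type combinations of \(g_1,g_2\): the minimum value appearing there is \(\min\{q-1,\,2q-1,\,1,\,q+1\}=1\), using \(q\ge 3\). The case \(g_1\in\mathrm{NS}\), \(g_2\in\mathrm{SS}\) is covered by part (3), since \(C_{g_1}\cdot C_{g_2}=C_{g_2}\cdot C_{g_1}\).

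So there is \(J\in C_{g_1}\) with \(\Tr(Jg_2)=x\). The element \(y=Jg_2\in C_{g_1}\cdot C_{g_2}\) then has \(\Tr(y)=\Tr(h)\) and \(\det(y)=\det(g_1g_2)=\det(h)\).

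The main obstacle is the final step: concluding that \(y\) is conjugate to \(h\). Trace and determinant determine the characteristic polynomial of \(y\). Because \(h\) is non-central semisimple, this polynomial is either irreducible or has two distinct roots, so it has no repeated root. A matrix whose characteristic polynomial has distinct roots (or is irreducible) is semisimple and non-scalar, and is therefore determined up to conjugacy by its characteristic polynomial. This is the content of Lemma \ref{conjugacy classes of GL2} and Remark \ref{trace and determinant determine the semisimple classes}. Hence \(y\in C_h\), i.e. \(C_h\subset C_{g_1}\cdot C_{g_2}\).

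Taking \(x_1x_2=y\) for the factorization of \(y\) and \(x_3=y^{-1}\in C_{h^{-1}}=C_{g_3}\) then yields \(x_1x_2x_3=I\) with \(x_i\in C_{g_i}\), which proves the lemma.
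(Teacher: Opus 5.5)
Your proof is correct and follows essentially the same route as the paper. Both arguments reduce to showing that the class of $g_3^{-1}$, which has determinant $\det(g_1g_2)$, occurs in $C_{g_1}\cdot C_{g_2}$, and both obtain this from the trace counts of Proposition \ref{main theorem-1-4} together with the fact that a non-central semisimple class is determined by its trace and determinant; the paper packages this step as the multiplicity formulas of Corollary \ref{main theorem-***}, whereas you use the positivity of $t(x;g_1,g_2)$ directly, which also handles the mixed-type triples uniformly where the paper only says they follow by a similar argument.
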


\begin{proof}
First, assume that all \(g_i\in \mathrm{SS}\) for \(i=1,2,3\).
We take \(g_i\in E^\times\smallsetminus \F_{q}^\times\), where
\(E=\F_q\oplus \F_q\).
Using the description of \(C_{g_1}\cdot C_{g_2}\) in Proposition \ref{main theorem-***} (1) together with Lemma \ref{lemma:no-scalars-mixed-types}, we obtain
\(I\in C_{g_1}\cdot C_{g_2}\cdot C_{g_3}
\iff
I\in C_g\cdot C_{g_3}\)
for a unique \(g\in \mathbf{SS}\).
Since
\(\det(g_1g_2g_3)=1\) and \(g_3\in \mathrm{SS}\),
we have
\([g_3^{-1}]_{\sigma}\in \mathbf{SS}
\).
Thus, 
\(I\in C_{g_1}\cdot C_{g_2}\cdot C_{g_3}\).
Now, the remaining cases follow by a similar argument.
\end{proof}

\begin{remark} \label{Horn's problem}
The above lemma gives an affirmative answer to multiplicative Horn's problem or Deligne-Simpson problem (see \cite{Simpson-1990, Arielle-2025}) for conjugacy classes of non-scalar semisimple elements in \(G=\GL_2(\F_q)\). This problem asks if there exists \((x,y,z) \in C_{g_1} \times  C_{g_2} \times C_{g_3} \) such that \(xyz=I\). 
What becomes clear is that the necessary condition \(\det(g_1 g_2 g_3)=1\) is also sufficient for the existence of \((x,y,z)\) such that \(xyz=I\), i.e.  \( I \in C_{g_1}\cdot C_{g_2}\cdot C_{g_3} \).
Although, in Lemma \ref{existence of I in triple product always}, it is assumed that all \(g_i\)'s are not scalar and semisimple, but it is not difficult to verify that the assumption of \(g_i\)'s being semisimple is not necessary.
\end{remark}
The following proposition gives \( m(I, C_{g_1}\cdot C_{g_2}\cdot C_{g_3})\) the number  of triples \((x,y,z) \in C_{g_1} \times  C_{g_2} \times C_{g_3} \) satisfying \(xyz=I\).

\begin{proposition}
\label{triple product of non split conjugacy classes}
Let \(g_1, g_2, g_3 \in \mathrm{SS}  \cup \NS\) be such that \(\det(g_1 g_2 g_3)=1\). 
For $g_1,g_2,g_3\in E^\times \smallsetminus \F_{q}^{\times}$ with \(E= \F_q \oplus \F_q\) or \(E=\F_{q^2}\), let \( \mathcal{A} :=\{g'_1g'_2g'_3:g'_i\in \{g_i, g_i^{\sigma}\}\} \).
Then
\begin{enumerate}
\item If all \(g_i \in \mathrm{SS}\), we take  \(g_1, g_2, g_3 \in E^{\times} \smallsetminus \F_{q}^{\times}\) with \( E=\F_{q}\oplus \F_q\), then \\ 
\(
m(I, C_{g_1}\cdot C_{g_2}\cdot C_{g_3})=\begin{cases}
(2q-1)(q^2+q) & \text{ if $I\in \mathcal{A}$},\\
q^3-q & \text{ otherwise}.
\end{cases}
\)
\item If all \(g_i \in \NS\), we take  \(g_1, g_2, g_3 \in E^{\times} \smallsetminus \F_{q}^{\times}\) with \( E=\F_{q^2}\), then \\
\(
m(I, C_{g_1}\cdot C_{g_2}\cdot C_{g_3})=\begin{cases}
q^2-q & \text{ if $I\in \mathcal{A}$},\\
q^3-q & \text{ otherwise}.
\end{cases}
\)
\item If the \(g_i\)'s are not all in \(\mathrm{SS}\) or not all in \(\mathrm{NS}\), then
\(
m\left(I,C_{g_1}\cdot C_{g_2}\cdot C_{g_3}\right)=q^3-q.
\)
\end{enumerate}
\end{proposition}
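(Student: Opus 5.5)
The plan is to reduce the triple count to the double-product multiplicities of Corollary \ref{main theorem-***}. For any $g_1,g_2,g_3$ we have
\[
m\bigl(I,C_{g_1}\cdot C_{g_2}\cdot C_{g_3}\bigr)=\sum_{(x,y)\in C_{g_1}\times C_{g_2}} \mathbf{1}\bigl[(xy)^{-1}\in C_{g_3}\bigr]=|C_{g_3^{-1}}|\cdot m\bigl(C_{g_3^{-1}};g_1,g_2\bigr),
\]
since every element of the class $C_{g_3^{-1}}$ occurs in $C_{g_1}\cdot C_{g_2}$ with the same multiplicity. Here $g_3^{-1}$ is non-central semisimple of the same type as $g_3$, and $\det(g_3^{-1})=\det(g_1g_2)$. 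The class sizes are $|C_h|=q(q+1)$ for $h\in\mathrm{SS}$ and $|C_h|=q(q-1)$ for $h\in\NS$. It therefore remains to read off $m(C_{g_3^{-1}};g_1,g_2)$ from Proposition \ref{proposition 3.11-1,2,3} and to translate the condition ``$g_3^{-1}\in K$'' into ``$I\in\mathcal{A}$''.

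In case (1) all $g_i$ lie in $E^\times\smallsetminus\F_q^\times$ with $E=\F_q\oplus\F_q$. By Proposition \ref{proposition 3.11-1,2,3}(1), $m(C_{g_3^{-1}};g_1,g_2)$ equals $2q-1$ if $g_3^{-1}$ is $\GL_2$-conjugate to an element of $K$, and $q-1$ otherwise. By Lemma \ref{exact two elements of a class in the embedding}, $C_{g_3^{-1}}\cap E^\times=\{g_3^{-1},(g_3^{-1})^\sigma\}$, so $C_{g_3^{-1}}$ meets $K$ if and only if $g_1'g_2'g_3'=I$ for some choice $g_i'\in\{g_i,g_i^\sigma\}$, i.e. if and only if $I\in\mathcal{A}$. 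Multiplying by $q(q+1)$ gives $(2q-1)(q^2+q)$ or $(q-1)q(q+1)=q^3-q$.

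Case (2) is identical, using part (2) of the proposition, which gives multiplicity $1$ or $q+1$, and the class size $q(q-1)$; this yields $q^2-q$ or $q^3-q$.

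In case (3) the types are mixed. By the symmetry of the count under cyclic permutation ($xyz=I\iff yzx=I$) and under inversion composed with reversal, we may assume $g_1\in\mathrm{SS}$ and $g_2\in\NS$, with $g_3$ of either type. Proposition \ref{proposition 3.11-1,2,3}(3) gives multiplicity $q-1$ if $g_3^{-1}\in\mathrm{SS}$ and $q+1$ if $g_3^{-1}\in\NS$. Multiplying by the corresponding class size gives $q^3-q$ in both subcases.

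The main obstacle is a bookkeeping point in case (3). Among $g_1,g_2,g_3$ the types are, up to permutation, two SS and one NS, or the reverse, so at least one pair $(g_i,g_j)$ of mixed type must be placed in the first two slots. A cyclic rotation can always arrange this, because some cyclically adjacent pair has different types. If the ordered pair comes out as $(\NS,\mathrm{SS})$ rather than $(\mathrm{SS},\NS)$, we use $C_{g_1}\cdot C_{g_2}=C_{g_2}\cdot C_{g_1}$ from the lemma in Section \ref{Sec:4}; one checks directly that the multiplicity of a class in $C_{g_1}\cdot C_{g_2}$ is symmetric in $g_1,g_2$, via conjugation $xy\mapsto y(xy)y^{-1}=yx$. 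Finally, in cases (1) and (2) one must check that Proposition \ref{proposition 3.11-1,2,3} applies to the representative $g_3^{-1}$, which lies in $E^\times$ as required.
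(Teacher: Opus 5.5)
Your proposal is correct and is essentially the paper's argument. The paper likewise writes $m(I,C_{g_1}\cdot C_{g_2}\cdot C_{g_3})=m(C_{g_3^{-1}};g_1,g_2)\cdot|C_{g_3}|$, reads off $m(C_{g_3^{-1}};g_1,g_2)$ from Corollary \ref{main theorem-***}, and uses $I\in\mathcal{A}\iff[g_3^{-1}]_{\sigma}\in\mathbf{K}$; your explicit treatment of the mixed case (cyclic rotation plus $m(C_g;g_1,g_2)=m(C_g;g_2,g_1)$) fills in what the paper leaves as ``similar.''
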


\begin{proof}
Recall that \(K=\{g_1g_2, g_1g_2^{\sigma}, g_1^{\sigma}g_2, g_1^{\sigma}g_2^{\sigma}\}\) and 
\(
\mathbf{K}:=\{[g_1g_2]_{\sigma},[g_1g_2^{\sigma}]_{\sigma}\}.
\)
Then 
\[
I\in \mathcal{A} \iff [g_3^{-1}]_{\sigma}\in \mathbf{K}.
\]
First, we consider all \(g_i \in \mathrm{SS}\).
By Lemma \ref{existence of I in triple product always},  we have $[g_3^{-1}]_{\sigma}\in \mathbf{SS}$
and then \(I \in C_{g_3^{-1}} \cdot C_{g_3}\) with \(m(I;g_3^{-1},g_3)=|C_{g_3}|=q^2+q\) by Lemma \ref{lemma:no-scalars-mixed-types}.
Now, using Corollary \ref{main theorem-***} (1), if \(I \in \mathcal{A}\) then \([g_3^{-1}
]_{\sigma} \in \mathbf{SS} \cap \mathbf{K}\) with \(m(g_3^{-1}; g_1, g_2) = 2q-1\), otherwise \([g_3^{-1}]_{\sigma} \in \mathbf{SS} \smallsetminus \mathbf{K}\) with \(m(g_3^{-1}; g_1, g_2) = q-1\). 
Thus, \( m(I,C_{g_1}\cdot C_{g_2}\cdot C_{g_3})\) equals  \( (2q-1)(q^2+q)\) if \(I \in \mathcal{A}\) and \((q-1)(q^2+q)\) otherwise.
This proves part (1) of the proposition. Parts (2) and (3) are similar, which we skip.
\qedhere
\end{proof}

\subsection{Trilinear forms}
For $\pi_i\in\Irr(G)$, $i=1,2,3$, in this subsection, we compute
\(
m(\mathbb{1},\pi_1\otimes\pi_2\otimes\pi_3)
\)
the multiplicity of the trivial representation $\mathbb{1}$ in
$\pi_1\otimes\pi_2\otimes\pi_3$, viewed as a representation of $G$
via the diagonal embedding
\(
G\hookrightarrow G\times G\times G.
\)
For $\Lambda\in \widehat{E^\times}$, let $\overline{\Lambda}:=\Lambda|_{\F_q^\times}$.
For $\Lambda_1, \Lambda_2\in \widehat{E^\times}$, we define
\(K':=\{\Lambda_1\Lambda_2,\Lambda_1^{\sigma}
\Lambda_2, \Lambda_1\Lambda_2^{\sigma}
, \Lambda_1^{\sigma}
\Lambda_2^{\sigma}\}
 \).
The following lemma is a consequence of \cite[Theorem 1.1]{Archita-Hassain}.

\begin{lemma}\label{existenceof trivial character in tensor products}
\begin{enumerate}
\item For $\pi_1,\pi_2\in \Irr(G)$, we have \(m(\mathbb{1}, \pi_1 \otimes \pi_2) \leq 1\).
\item For $\pi_1\in \Irr(G)$, $\exists$ a unique $\pi_2\in \Irr(G)$ such that $m(\mathbb{1}, \pi_1 \otimes \pi_2)= 1$.
\item Let $(\pi_1, \pi_2)=(\chi^{q+1}_{\Lambda_1},\chi^{q+1}_{\Lambda_2})$ or $(\pi_1, \pi_2)=(\chi^{q-1}_{\Lambda_1},\chi^{q-1}_{\Lambda_2})$ for indecomposable \(\Lambda_1, \Lambda_2 \in \widehat{E^{\times}}\).
Then \(m(\mathbb{1}, \pi_1 \otimes \pi_2) = 1 \iff  \mathbb{1} \in K'\).
\end{enumerate} 
\end{lemma}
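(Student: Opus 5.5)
The plan is to derive all three parts from the character-theoretic description $m(\mathbb{1},\pi_1\otimes\pi_2)=\langle \chi_{\pi_1}\chi_{\pi_2},1\rangle_G=\langle \chi_{\pi_1},\overline{\chi_{\pi_2}}\rangle_G$, using only Schur orthogonality and the classification in Proposition \ref{classification of irreducible representations of G}. The statement is attributed to \cite[Theorem 1.1]{Archita-Hassain}, but this route is essentially self-contained.

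For parts (1) and (2), orthogonality of irreducible characters gives
\[
m(\mathbb{1},\pi_1\otimes\pi_2)=\delta_{\pi_1,\pi_2^{\vee}}.
\]
Hence the multiplicity is at most one, and it equals one exactly when $\pi_2\cong\pi_1^{\vee}$, which is unique up to isomorphism.

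For part (3), I reduce to identifying the contragredient in each family.

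\begin{itemize}
\item \textbf{Principal series.} Take $E=\F_q\oplus\F_q$ and $\Lambda=(\chi_1,\chi_2)$. Since induction commutes with duals, $(\Ind_B^G\Lambda)^{\vee}\cong\Ind_B^G(\Lambda^{-1})$. So $\pi_2\cong\pi_1^\vee$ if and only if $\Lambda_2\sim\Lambda_1^{-1}$, by the isomorphism criterion in Proposition \ref{classification of irreducible representations of G}.
\item \textbf{Cuspidal.} Take $E=\F_{q^2}$. I use the standard character values of $\chi^{q-1}_\Lambda$: on $E^\times\smallsetminus\F_q^\times$ the value is $-(\Lambda(z)+\Lambda^\sigma(z))$, on $\F_q^\times$ it is $(q-1)\Lambda(a)$, and on unipotent-times-central elements it is $-\Lambda(a)$. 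Complex conjugation replaces $\Lambda$ by $\overline{\Lambda}=\Lambda^{-1}$ throughout, so $(\chi^{q-1}_\Lambda)^\vee\cong\chi^{q-1}_{\Lambda^{-1}}$. Again, $\pi_2\cong\pi_1^\vee$ if and only if $\Lambda_2\sim\Lambda_1^{-1}$.
\end{itemize}

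Finally, $\Lambda_2\sim\Lambda_1^{-1}$ means $\Lambda_2\in\{\Lambda_1^{-1},(\Lambda_1^{\sigma})^{-1}\}$. This holds if and only if $\Lambda_1\Lambda_2=\mathbb{1}$ or $\Lambda_1^\sigma\Lambda_2=\mathbb{1}$. Applying $\sigma$, this is equivalent to $\mathbb{1}\in K'$, since the remaining two elements of $K'$ are the $\sigma$-conjugates of these two.

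I expect the only non-formal step to be the cuspidal contragredient, since it needs the explicit cuspidal character table from \cite{Fulton-Harris}. The other steps are formal consequences of orthogonality.
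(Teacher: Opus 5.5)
Your proof is correct and is essentially the paper's argument. Parts (1) and (2) rest on $\Hom_G(\pi_1\otimes\pi_2,\mathbb{1})\cong\Hom_G(\pi_1,\pi_2^{\vee})$ and Schur's lemma, which you phrase equivalently via character orthogonality, and part (3) rests on $\chi^{\varepsilon}_{\Lambda}\cong\chi^{\varepsilon}_{\Lambda^{\sigma}}$ and $(\chi^{\varepsilon}_{\Lambda})^{\vee}\cong\chi^{\varepsilon}_{\Lambda^{-1}}$; the paper cites these from Fulton--Harris, whereas you verify them (your list of cuspidal character values omits the split non-central semisimple classes, but the value there is $0$, so the conjugation argument is unaffected).
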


\begin{proof}
We know that \(
\begin{aligned}
\Hom_G(\pi_1\otimes\pi_2, \mathbb{1}) \cong \Hom_G(\pi_1,\pi_2^\vee),
\end{aligned}
\)
where  \(\pi_2^\vee\) is the dual representation of $\pi_2$.
Now, parts (1) and (2) follow from Schur's lemma.
Part (3) follows from 
\(
\chi_{\Lambda}^{\varepsilon}
\cong 
\chi_{\Lambda^\sigma}^{\varepsilon}
\)
and
\(
\left(\chi_{\Lambda}^{\varepsilon}\right)^\vee
\cong 
\chi_{\Lambda^{-1}}^{\varepsilon}
\) 
for $\varepsilon\in\{q+1,q-1\}$, 
see \cite{Fulton-Harris}.
\qedhere
\end{proof}

Let $E=\F_{q}\oplus \F_q$ or $E=\F_{q^2}$.
For indecomposable $\Lambda\in \widehat{E^\times}$, define 
\(
[\Lambda]_{\sigma}:=\{\Lambda, \Lambda^{\sigma}\},~\widehat{E^\times}/\langle \sigma\rangle:=\{[\Lambda]_{\sigma}:\Lambda\in \widehat{E^\times},\Lambda \text{ indecomposable} \}.
\)

\begin{proposition}
\label{trivial representation in pi_1 otimes pi_2 otimes pi_3}
Let \(E\) be either \(\F_q\oplus \F_q\) or \(\F_{q^2}\).
For \(i=1,2,3\), let \(\Lambda_i\in\widehat{E^\times}\) be indecomposable such that \(\overline{\Lambda}_1\overline{\Lambda}_2\overline{\Lambda}_3=\mathbb{1}\).
If \(\Lambda_1,\Lambda_2,\Lambda_3\) are characters of the same $E^\times$, then define
\(
\mathcal{B}
:=
\{\Lambda'_1\Lambda'_2\Lambda'_3:
\Lambda'_i\in\{\Lambda_i,\Lambda_i^\sigma\}\}.
\)
Then
\begin{enumerate}
\item For \(i=1,2,3\), if  \(\pi_i=\chi_{\Lambda_i}^{q+1}\), then
\(
m(\mathbb{1},\pi_1\otimes\pi_2\otimes\pi_3)
=
\begin{cases}
2 & \text{if \(\mathbb{1}\in\mathcal{B}\)},\\
1 & \text{otherwise}.
\end{cases}
\)

\item For \(i=1,2,3\), if \(\pi_i=\chi_{\Lambda_i}^{q-1}\), then
\(
m(\mathbb{1},\pi_1\otimes\pi_2\otimes\pi_3)
=
\begin{cases}
0 & \text{if \(\mathbb{1}\in\mathcal{B}\)},\\
1 & \text{otherwise}.
\end{cases}
\)

\item If \(\pi_i=\chi^{q+1}_{\Lambda_i}\) or  \(\pi_i=\chi^{q-1}_{\Lambda_i}\) and not all have the same dimension, then
\[
m\left(\mathbb{1},
\pi_1\otimes\pi_2\otimes\pi_3\right)=1.
\]
\end{enumerate}
\end{proposition}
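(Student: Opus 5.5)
Since $m(\mathbb{1},\pi_1\otimes\pi_2\otimes\pi_3)=\dim\Hom_G(\pi_1\otimes\pi_2,\pi_3^\vee)$, the plan is to decompose $\pi_1\otimes\pi_2$ and read off the multiplicity of $\pi_3^\vee$. This mirrors the argument for the triple product of conjugacy classes in Proposition \ref{triple product of non split conjugacy classes}, with Lemma \ref{existenceof trivial character in tensor products} playing the role of Lemma \ref{lemma:no-scalars-mixed-types}.

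We would use the decomposition of $\pi_1\otimes\pi_2$ for Deligne--Lusztig representations from \cite[Theorem 1.1]{Archita-Hassain}. Alternatively, we can compute directly with the character inner product
\[
m=\frac{1}{|G|}\sum_{g\in G}\chi_{\pi_1}(g)\chi_{\pi_2}(g)\chi_{\pi_3}(g),
\]
using the character table of $\GL_2(\F_q)$ from \cite{Fulton-Harris}.

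\textbf{Case (1): three principal series.} By Mackey theory, $\Ind_B^G\Lambda_1\otimes\Ind_B^G\Lambda_2\cong\Ind_B^G(\Lambda_1\otimes\Res_B\Ind_B^G\Lambda_2)$. The double cosets $B\backslash G/B$ have two elements, so this equals
\[
\Ind_B^G(\Lambda_1\Lambda_2)\oplus\Ind_T^G(\Lambda_1\Lambda_2^\sigma).
\]
Frobenius reciprocity then gives
\[
m=\dim\Hom_B(\Lambda_1\Lambda_2\Lambda_3',\mathbb{1})+\dim\Hom_T\bigl(\Lambda_1\Lambda_2^\sigma\otimes\Res_T\pi_3,\mathbb{1}\bigr),
\]
where $\Lambda_3'$ runs over the appropriate twist. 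The restriction $\Res_T\Ind_B^G\Lambda_3$ equals $\Lambda_3\oplus\Lambda_3^\sigma\oplus$ (the regular representation of $T/Z$ twisted by $\overline\Lambda_3$). Counting:
\begin{enumerate}
\item[(a)] The regular part contributes exactly $1$, because $\overline\Lambda_1\overline\Lambda_2\overline\Lambda_3=\mathbb{1}$.
\item[(b)] The terms $\Lambda_3,\Lambda_3^\sigma$ and the $B$-term each contribute $1$ exactly when some product in $\mathcal{B}$ is trivial.
\end{enumerate}
One must check carefully that the contributions combine to $2$ if $\mathbb{1}\in\mathcal{B}$ and to $1$ otherwise. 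Indecomposability ensures that at most one coincidence occurs, up to the symmetry $\sigma$.

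\textbf{Cases (2) and (3).} Here the character-sum approach is cleaner. The cuspidal character is:
\begin{enumerate}
\item[(i)] $(q-1)\overline\Lambda(a)$ on a scalar $aI$;
\item[(ii)] $-\overline\Lambda(a)$ on a unipotent-times-scalar element;
\item[(iii)] $0$ on $\mathrm{SS}$;
\item[(iv)] $-(\Lambda(z)+\Lambda(z^\sigma))$ on $z\in\mathrm{NS}$.
\end{enumerate}
The principal series character takes the values $q+1$, $1$, $\Lambda(x)+\Lambda(x^\sigma)$, $0$ on the corresponding classes.

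Summing over the classes of Lemma \ref{conjugacy classes of GL2}:
\begin{enumerate}
\item[(a)] The central and unipotent contributions give an explicit polynomial in $q$, using the central character condition.
\item[(b)] The $\mathrm{NS}$ (respectively $\mathrm{SS}$) contribution becomes a sum over $E^\times\smallsetminus\F_q^\times$ of $\prod_i(\Lambda_i(z)+\Lambda_i^\sigma(z))$. Expanding gives $\sum_{\Lambda'\in\mathcal{B}}\sum_{z}\Lambda'(z)$ with the $\F_q^\times$ part removed.
\item[(c)] By orthogonality of characters of $E^\times$, the sum in (b) is $|E^\times|\cdot\#\{\Lambda'\in\mathcal{B}:\Lambda'=\mathbb{1}\}$ minus the sum over $\F_q^\times$. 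The latter is controlled by $\overline\Lambda_1\overline\Lambda_2\overline\Lambda_3=\mathbb{1}$.
\end{enumerate}
Collecting terms should give $1-[\mathbb{1}\in\mathcal{B}]$ in the cuspidal case and $1+[\mathbb{1}\in\mathcal{B}]$ in the principal series case. The same bookkeeping would also recover case (1), as a check on the Mackey computation.

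In the mixed case (3), the characters are supported on different tori: principal series characters vanish on $\mathrm{NS}$ and cuspidal ones vanish on $\mathrm{SS}$. So only the central and unipotent classes survive, together with one torus whose sum collapses to its $\F_q^\times$ part. The result should be exactly $1$.

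\textbf{Main obstacle.} The main difficulty is counting how many elements of $\mathcal{B}$ are trivial. If $\Lambda_1\Lambda_2\Lambda_3=\mathbb{1}$, then $\Lambda_1^\sigma\Lambda_2^\sigma\Lambda_3^\sigma=\mathbb{1}$ as well, so trivial elements come in $\sigma$-pairs. We need to show that indecomposability rules out a second independent pair. A second pair such as $\Lambda_1\Lambda_2\Lambda_3^\sigma=\mathbb{1}$ would force $\Lambda_3=\Lambda_3^\sigma$, contradicting Lemma \ref{lemma relating non-split class and cuspidal representations}. Doing this carefully for every position of the $\sigma$ should make the normalization come out to exactly $\pm1$.
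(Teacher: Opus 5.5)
Your argument is correct, but it takes a different route from the paper. The paper quotes the explicit decomposition of $\pi_1\otimes\pi_2$ from \cite[Theorem 1.1]{Archita-Hassain}. It then uses Lemma~\ref{existenceof trivial character in tensor products} to single out the one constituent $\chi_\Gamma$, $[\Gamma]_\sigma=[\Lambda_3^{-1}]_\sigma$, that pairs with $\pi_3$. Finally it inspects the extra summands $\Ind_B^G(\Lambda_1\Lambda_2)$, $\Ind_B^G(\Lambda_1\Lambda_2^\sigma)$ in the principal series case, or the excluded set $X$ in the cuspidal case.

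You instead work directly from Mackey/Frobenius and the character table. Made precise, your case (1) gives
\[
m=1+[\Lambda_1\Lambda_2\Lambda_3=\mathbb{1}]+[\Lambda_1\Lambda_2\Lambda_3^\sigma=\mathbb{1}]+[\Lambda_1\Lambda_2^\sigma\Lambda_3=\mathbb{1}]+[\Lambda_1\Lambda_2^\sigma\Lambda_3^\sigma=\mathbb{1}],
\]
where $[P]\in\{0,1\}$ is the truth value of $P$. The first two indicators come from $\Res_B\pi_3=\Lambda_3\oplus\Ind_T^B\Lambda_3^\sigma$ in the $B$-term. Your character sums give $m=1+n/2$ in case (1) and $m=1-n/2$ in case (2), where $n$ is the number of the eight sign choices with $\Lambda_1^{\epsilon_1}\Lambda_2^{\epsilon_2}\Lambda_3^{\epsilon_3}=\mathbb{1}$.

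Your Hilbert 90 observation closes both cases. Two trivial triples that are not $\sigma$-conjugate differ, after applying $\sigma$ to one of them, in exactly one slot, which forces $\Lambda_i=\Lambda_i^\sigma$ for some $i$. Hence $n=2[\mathbb{1}\in\mathcal{B}]$.

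Your route is self-contained and uniform in the three cases. It proves explicitly the ``at most one $\sigma$-orbit of trivial products'' fact that the paper leaves under ``it can be checked''. Its torus sums also visibly mirror the class counts of Proposition~\ref{triple product of non split conjugacy classes}. The paper's route is shorter given the cited theorem and records the whole decomposition of $\pi_1\otimes\pi_2$, at the cost of depending on that external result.

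There are two small corrections for the write-up.
\begin{enumerate}
\item In your steps (b)--(c), the expansion of $\prod_i(\Lambda_i(z)+\Lambda_i^\sigma(z))$ runs over the eight sign choices (a multiset), not over the set $\mathcal{B}$. The resulting factor $2$ is exactly what cancels the $\tfrac12$ coming from the two-to-one map $E^\times\smallsetminus\F_q^\times\to[\mathrm{SS}]$ or $[\NS]$.
\item In the mixed case (3), no torus survives at all, so drop the phrase about a torus collapsing to its $\F_q^\times$ part. The product of the three characters vanishes on both $\mathrm{SS}$ and $\NS$. The central and unipotent terms alone already give $|G|$: for two principal series and one cuspidal the sum is $(q-1)^2(q+1)^2-(q-1)(q^2-1)$, and for one principal series and two cuspidals it is $(q-1)^3(q+1)+(q-1)(q^2-1)$. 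Both equal $q(q-1)^2(q+1)=|G|$.
\end{enumerate}
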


\begin{proof}
\begin{enumerate}
\item Let $\pi_i=\chi^{q+1}_{\Lambda_i}$ for \(i=1,2,3\).
By \cite[Theorem~1.1 (9)]{Archita-Hassain}, we have
\begin{align}\label{eq-6.8}
\pi_1\otimes\pi_2
=
& \underset{[\Gamma]_{\sigma}\in S_{\overline{\Lambda}_1\overline{\Lambda}_2}}{\bigoplus}  \chi^{q+1}_{\Gamma}\bigoplus \Ind_B^{G}(\Lambda_1\Lambda_2)\otimes \pi_3
\bigoplus \Ind_B^{G}(\Lambda_1\Lambda_2^{\sigma})\otimes \pi_3
\nonumber
\\
& \bigoplus ~(\text{non-principal series constituents}),
\end{align}
where $S_{\overline{\Lambda}_1\overline{\Lambda}_2}:=\{[\Gamma]_{\sigma}\in \widehat{E^\times}/\langle \sigma\rangle :~\Gamma|_{\F_q^\times}=\overline{\Lambda}_1\overline{\Lambda}_2\}$.
Note that if $\pi$ is not a principal series representation of $G$, then  \(m(\mathbb{1}, \pi\otimes \pi_3)=0\).
Since  $\overline{\Lambda}_1\overline{\Lambda}_2\overline{\Lambda}_3=\mathbb{1}$, using Lemma \ref{existenceof trivial character in tensor products}, we have a unique 
\(
[\Gamma]_{\sigma}=[\Lambda_3^{-1}]_{\sigma}\in S_{{\overline{\Lambda}_1\overline{\Lambda}_2}}
\)
such that $m(\mathbb{1},\chi_{\Gamma}^{q+1}\otimes \pi_3)=1$.
Now, using Lemma \ref{existenceof trivial character in tensor products}, it can be checked that
\[
\mathbb{1}\in \Ind_B^{G}(\Lambda_1\Lambda_2)\otimes \chi^{q+1}_{\Lambda_3}
\bigoplus \Ind_B^{G}(\Lambda_1\Lambda_2^{\sigma})\otimes \chi^{q+1}_{\Lambda_3} \iff \mathbb{1}\in \mathcal{B}.
\]
This proves (1).

\item Let $\pi_i=\chi^{q-1}_{\Lambda_i}$ for \(i=1,2,3\).
By \cite[Theorem~1.1(10)--(13)]{Archita-Hassain}, we have
\begin{align}\label{tensor product of two cuspidals in theorem 8.15}
\pi_1\otimes\pi_2
=
\bigoplus_{[\Gamma]_{\sigma}\in
W_{\overline{\Lambda}_1\overline{\Lambda}_2}\smallsetminus X}
\chi^{q-1}_{\Gamma}
\bigoplus
(\text{non-cuspidal constituents}),
\end{align}
where
\(
W_{\overline{\Lambda}_1\overline{\Lambda}_2}
:=
\left\{
[\Gamma]_{\sigma}\in
\widehat{E^\times}/\langle\sigma\rangle:
\Gamma|_{\mathbb F_q^\times}
=
\overline{\Lambda}_1\overline{\Lambda}_2
\right\}\) and 
\(
X:=
\left\{
[\Lambda_1\Lambda_2]_{\sigma},
[\Lambda_1\Lambda_2^{\sigma}]_{\sigma}
\right\}
\cap W_{\overline{\Lambda}_1\overline{\Lambda}_2}.
\)
Now, consider $\pi_1\otimes\pi_2\otimes\pi_3$.
Note that if $\pi$ is not a cuspidal representation of $G$, then  \(m(\mathbb{1}, \pi\otimes \pi_3)=0\).
Since
\(
\overline{\Lambda}_1\overline{\Lambda}_2\overline{\Lambda}_3
=
\mathbb{1},
\) by Lemma \ref{existenceof trivial character in tensor products} (2), there exists a unique
\([\Gamma]_{\sigma} :=[\Lambda_3^{-1}]_{\sigma}\in W_{\overline{\Lambda}_1\overline{\Lambda}_2}\) such that
\(
m\bigl(\mathbb{1},
\chi_{\Gamma}^{q-1}\otimes\chi_{\Lambda_3}^{q-1}\bigr)=1.
\)
Now, the proof follows by observing that 
\(
[\Lambda_3^{-1}]_{\sigma}\in X
\iff
\mathbb{1}\in\mathcal{B}.
\)
\item Let \(\pi_1=\chi^{q+1}_{\Lambda_1}\) and \( \pi_2=
\chi^{q-1}_{\Lambda_2}\).
Then using \cite[Theorem 1.1 (9)]{Archita-Hassain}, we have 
\begin{align}\label{tensor product of split and non split in theorem 5.5}
\pi_1\otimes\pi_2
=
\bigoplus_{[\Gamma]_{\sigma}\in
W_{\overline{\Lambda}_1\overline{\Lambda}_2}}
\chi^{q-1}_{\Gamma}\bigoplus_{[\Gamma']_{\sigma}\in
S_{\overline{\Lambda}_1\overline{\Lambda}_2}}
\chi^{q+1}_{\Gamma'}
\bigoplus
\text{ (other  constituents)},
\end{align}
where \(W_{\overline{\Lambda}_1\overline{\Lambda}_2}\) and 
\(S_{\overline{\Lambda}_1\overline{\Lambda}_2}\) are as before.
Now it can be easily  seen that for  \( \pi_3 = \chi^{q+1}_{\Lambda_3} \) or \( \pi_3 = \chi^{q-1}_{\Lambda_3} \), we get \(m( \mathbb{1}, \pi_1 \otimes \pi_2\otimes \pi_3) =1\).
\qedhere
\end{enumerate}    
\end{proof}

\subsection{Relating trilinear forms with triple product of conjugacy classes}
In this subsection, we relate the trilinear forms with the triple product of conjugacy classes and prove the main theorem of the paper.
\begin{definition}\label{exceptional}
\begin{enumerate}
\item Let \( \pi_i = \chi_{\Lambda_i}^{q+1} \) for \(i=1,2,3\) or \( \pi_i = \chi_{\Lambda_i}^{q-1} \) for \(i=1,2,3\). We say that \(\{ \pi_1, \pi_2, \pi_3 \}\) is exceptional if 
\(
\mathbb{1} \in \{ \Lambda_1 ' \Lambda_2' \Lambda_3 ' : \Lambda_i ' \in \{ \Lambda_i, \Lambda_i^{\sigma} \} \} 
\).
\item Let \( g_i \in \F_q^{\times} \times \F_{q}^{\times}\smallsetminus\F_{q}^\times\) for \(i =1,2,3\) or \( g_i \in \F_{q^2}^{\times}\smallsetminus\F_{q}^\times \) for \(i =1,2,3\). We say that \(\{C_{g_1}, C_{g_2}, C_{g_3} \}\) is exceptional if 
\(
I \in \{ g_1 ' g_2 ' g_3 ' : g_i ' \in \{ g_i, g_i^{\sigma}\} \}.
\)
\end{enumerate}
\end{definition}

\begin{theorem} \label{main theorem}
Let \(\pi_i\) for \(i=1,2,3\) be irreducible Deligne-Lusztig representations of $\GL_2(\F_q)$ such that the product of their central characters is trivial.
Let \( \Psi(\pi_i) =C_{g_i}\) for \(i=1,2,3\). 
Then 
\begin{enumerate}
\item For \(i=1,2,3\), let \( \pi_i = \chi_{\Lambda_i}^{q+1} \)  or \( \pi_i = \chi_{\Lambda_i}^{q-1} \).
Then \(\{ \pi_1, \pi_2, \pi_3 \}\) is exceptional if and only if \(\{C_{g_1}, C_{g_2}, C_{g_3} \}\) is exceptional.
\item \( m(\mathbb{1}, \pi_1 \otimes \pi_2 \otimes \pi_3 ) = 1 \iff m( I, C_{g_1} \cdot C_{g_2} \cdot C_{g_3}) =q^3 -q\).
\item Let \( \pi_i = \chi_{\Lambda_i}^{q+1} \) for \(i=1,2,3\).
Then \( m(\mathbb{1}, \pi_1 \otimes \pi_2 \otimes \pi_3) = 2 \iff m(I, C_{g_1} \cdot C_{g_2} \cdot C_{g_3}) =(2q-1)(q^2 +q)\).
In this case, \(\{\pi_1, \pi_2, \pi_3\}\) is exceptional.
\item Let \( \pi_i = \chi_{\Lambda_i}^{q-1} \) for \(i=1,2,3\).
Then \( m(\mathbb{1}, \pi_1 \otimes \pi_2 \otimes \pi_3) = 0 \iff m(I, C_{g_1} \cdot C_{g_2} \cdot C_{g_3}) =q^2 -q \).
In this case, \(\{\pi_1, \pi_2,\pi_3\}\) is exceptional.
\end{enumerate}
\end{theorem}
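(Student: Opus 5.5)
The plan is to reduce everything to a dictionary between characters and group elements furnished by \(\Theta\), and then to compare Proposition \ref{triple product of non split conjugacy classes} with Proposition \ref{trivial representation in pi_1 otimes pi_2 otimes pi_3} case by case.

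\emph{Step 1: the determinant condition.} Write \(\pi_i=\chi^{\varepsilon_i}_{\Lambda_i}\) with \(\Lambda_i\) indecomposable. By Definition \ref{definition of Psi map}, \(g_i=\Theta(\Lambda_i)\). Lemma \ref{lemma relating non-split class and cuspidal representations} gives \(g_i\notin\F_q^\times\), so \(g_i\in \mathrm{SS}\) when \(\varepsilon_i=q+1\) and \(g_i\in\mathrm{NS}\) when \(\varepsilon_i=q-1\). The central character of \(\pi_i\) is \(\overline{\Lambda}_i\). Lemma \ref{lemma for extension of isomporhism} then yields
\[
\theta(\overline{\Lambda}_1\overline{\Lambda}_2\overline{\Lambda}_3)=\N(g_1)\N(g_2)\N(g_3)=\det(g_1g_2g_3).
\]
Hence triviality of the product of central characters is equivalent to \(\det(g_1g_2g_3)=1\). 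This is exactly the hypothesis needed to apply Proposition \ref{triple product of non split conjugacy classes}.

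\emph{Step 2: part (1).} Suppose all \(\Lambda_i\) lie in the same \(\widehat{E^\times}\). Since \(\Theta\) is a group isomorphism with \(\Theta(\Lambda^\sigma)=\Theta(\Lambda)^\sigma\), we get
\[
\Theta(\Lambda_1'\Lambda_2'\Lambda_3')=g_1'g_2'g_3'
\quad\text{for } \Lambda_i'\in\{\Lambda_i,\Lambda_i^\sigma\},\ g_i'=\Theta(\Lambda_i').
\]
Moreover \(\Theta(\mathbb 1)=I\), with \(1\in E^\times\) embedded as \(I\). Therefore \(\mathbb 1\in\mathcal B\) if and only if \(I\in\mathcal A\), which proves part (1).

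The one point requiring care is the compatibility \(\Theta(\Lambda^\sigma)=\Theta(\Lambda)^\sigma\), which the paper asserts after Lemma \ref{lemma for extension of isomporhism}.
\begin{itemize}
\item In the split case it is immediate from the componentwise definition of \(\Theta\).
\item In the cyclic case, \(\Lambda^\sigma=\Lambda^q\), and \(\Theta\) is a homomorphism, so \(\Theta(\Lambda^q)=\Theta(\Lambda)^q=\Theta(\Lambda)^\sigma\).
\end{itemize}
This is the main obstacle in the plan, and the argument above disposes of it.

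\emph{Step 3: parts (2)--(4).} We match the three cases of the two propositions.
\begin{itemize}
\item \emph{Mixed types.} If the \(\pi_i\) are not all of the same dimension, then the \(g_i\) are not all in \(\mathrm{SS}\) nor all in \(\mathrm{NS}\). In this case \(m(\mathbb 1,\cdot)=1\) and \(m(I,\cdot)=q^3-q\).
\item \emph{All principal series.} If all \(\varepsilon_i=q+1\), then by part (1):
\begin{itemize}
\item in the exceptional case, \(m=2\) and the count is \((2q-1)(q^2+q)\);
\item otherwise, \(m=1\) and the count is \(q^3-q\).
\end{itemize}
\item \emph{All cuspidal.} If all \(\varepsilon_i=q-1\):
\begin{itemize}
\item in the exceptional case, \(m=0\) and the count is \(q^2-q\);
\item otherwise, \(m=1\) and the count is \(q^3-q\).
\end{itemize}
\end{itemize}
Since \(q\ge 3\), the three values \(q^3-q\), \((2q-1)(q^2+q)\) and \(q^2-q\) are pairwise distinct. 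Hence each multiplicity determines the other, which gives the equivalences in (2)--(4). The exceptionality claims in (3) and (4) follow from the case analysis, because \(m=2\) and \(m=0\) occur only in the exceptional cases.
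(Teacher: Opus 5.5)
Your proposal is correct and follows the paper's route. Part (1) comes from \(\Theta(\mathcal{B})=\mathcal{A}\), using that \(\Theta\) is an injective homomorphism commuting with \(\sigma\). Parts (2)--(4) are then read off by matching the cases of Propositions \ref{triple product of non split conjugacy classes} and \ref{trivial representation in pi_1 otimes pi_2 otimes pi_3}. You also make explicit three points the paper leaves tacit: the translation of the central-character condition into \(\det(g_1g_2g_3)=1\) via \(\theta\) and the norm; the fact that \(\sigma\)-equivariance of \(\Theta\) rests on the componentwise construction in the split case and is automatic in the cyclic case; and the pairwise distinctness of \(q^3-q\), \((2q-1)(q^2+q)\) and \(q^2-q\), which is what makes the stated equivalences hold.
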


\begin{proof}
Using Lemma \ref{lemma for extension of isomporhism},  $\Theta(\Lambda_i)=g_i$ and
$\Theta(\Lambda_i^\sigma)=g_i^\sigma$.
We get
$\Theta(\mathcal{B})= \mathcal{A}$, where $\mathcal{A}$ and $\mathcal{B}$ be as in Propositions \ref{triple product of non split conjugacy classes}, \ref{trivial representation in pi_1 otimes pi_2 otimes pi_3}.
This proves (1).
Part (2), (3) and (4) follow from Propositions \ref{triple product of non split conjugacy classes}, \ref{trivial representation in pi_1 otimes pi_2 otimes pi_3}.
\qedhere
\end{proof}

The above theorem proves part (1) of Theorem \ref{intro: main theorem} in the Introduction.

\begin{definition}\label{definition of indicators}
For given \( \pi_1, \pi_2, \pi_3 \in \Irr(\GL_2(\F_q))\), let
\(
\mathcal{E}_{ss}
\)
(respectively, \(\mathcal{E}_{ns}\)) equal 1 if
$\{\pi_1,\pi_2,\pi_3\}$ is exceptional with each
$\pi_i$ an irreducible principal series representation (respectively, cuspidal representation), and
$0$ otherwise.
\end{definition}

\begin{corollary}\label{last corollary}
Let $\pi_1,\pi_2,\pi_3$ be irreducible Deligne--Lusztig
representations of $\GL_2(\F_q)$ and let $\Psi(\pi_i)=C_{g_i}$ for $i=1,2,3$.
Assume that the product of the central characters of $\pi_1,\pi_2,\pi_3$ is trivial.
Then
\[
m\bigl(I,C_{g_1}\cdot C_{g_2}\cdot C_{g_3}\bigr)
=
q(q^2-1)\,
m\bigl(\mathbb{1},\pi_1\otimes\pi_2\otimes\pi_3\bigr)
+
q(q+1)\mathcal{E}_{ss}
+
q(q-1)\mathcal{E}_{ns}.
\]
\end{corollary}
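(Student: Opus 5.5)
The plan is a case check that combines Proposition \ref{triple product of non split conjugacy classes} (the conjugacy-class side) with Proposition \ref{trivial representation in pi_1 otimes pi_2 otimes pi_3} (the representation side). Theorem \ref{main theorem}(1) is used to match the exceptional conditions on the two sides.

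First I set up the input. Write \(\pi_i=\chi^{\varepsilon_i}_{\Lambda_i}\) with \(\varepsilon_i\in\{q+1,q-1\}\) and each \(\Lambda_i\) indecomposable. By Definition \ref{definition of Psi map}, \(g_i=\Theta(\Lambda_i)\in E^\times\smallsetminus\F_q^\times\), using Lemma \ref{lemma relating non-split class and cuspidal representations}. So \(g_i\in\mathrm{SS}\) exactly when \(\pi_i\) is principal series, and \(g_i\in\NS\) exactly when \(\pi_i\) is cuspidal.

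Next I translate the central-character hypothesis. The central character of \(\pi_i\) is \(\overline{\Lambda}_i\), so the hypothesis reads \(\overline{\Lambda}_1\overline{\Lambda}_2\overline{\Lambda}_3=\mathbb{1}\). Applying \(\theta\) and Lemma \ref{lemma for extension of isomporhism} gives \(\prod_i\N(g_i)=1\). Since \(\det(g_i)=\N(g_i)\) under the fixed embedding \(E\hookrightarrow M_2(\F_q)\), this is \(\det(g_1g_2g_3)=1\). Here each \(\Theta\) is applied in its own \(E\); the identity still holds because the \(\theta\) on the right of the diagram is the same for both choices of \(E\). Hence the hypotheses of Proposition \ref{triple product of non split conjugacy classes} hold.

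Now split into three cases.

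\emph{Case 1: all \(\pi_i\) are principal series.} Then \(\mathcal{E}_{ns}=0\). By Theorem \ref{main theorem}(1), \(\mathcal{E}_{ss}=1\) exactly when \(I\in\mathcal{A}\).
If exceptional, the left side is \((2q-1)(q^2+q)\) and \(m(\mathbb{1},\cdot)=2\). The check is
\[
2q(q^2-1)+q(q+1)=q(q+1)(2q-2+1)=(2q-1)(q^2+q).
\]
Otherwise both sides equal \(q^3-q\), since \(m(\mathbb{1},\cdot)=1\).

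\emph{Case 2: all \(\pi_i\) are cuspidal.} Then \(\mathcal{E}_{ss}=0\).
If exceptional, the left side is \(q^2-q\) and \(m(\mathbb{1},\cdot)=0\), so the right side is \(0+q(q-1)=q^2-q\).
Otherwise both sides equal \(q^3-q\).

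\emph{Case 3: mixed types.} Both indicators vanish. By part (3) of each proposition, the left side is \(q^3-q\) and \(m(\mathbb{1},\cdot)=1\), so the right side is also \(q^3-q\).

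The main obstacle is the bookkeeping that matches the conditions on the two sides. One point is that \(\mathcal{E}_{ss}\) and \(\mathcal{E}_{ns}\) are defined through \(\mathcal{B}\), while the class side uses \(\mathcal{A}\); the link is \(\Theta(\mathcal{B})=\mathcal{A}\), with \(\Theta(\mathbb{1})=I\) because \(\Theta\) is a group isomorphism. The other point is that the determinant condition must be confirmed through the norm compatibility of \(\Theta\), so that Proposition \ref{triple product of non split conjugacy classes} is applicable. Once these are in place, the remaining work is the arithmetic above.
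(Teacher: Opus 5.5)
Your proposal is correct and follows the paper's implicit argument. The paper proves the corollary by combining Proposition~\ref{triple product of non split conjugacy classes} with Proposition~\ref{trivial representation in pi_1 otimes pi_2 otimes pi_3}, matching exceptionality through \(\Theta(\mathcal{B})=\mathcal{A}\) as in Theorem~\ref{main theorem}, and then doing the same three-case arithmetic; your check that the central-character hypothesis gives \(\det(g_1g_2g_3)=1\), via the norm compatibility of \(\Theta\), spells out a step the paper leaves unstated.
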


Theorem~\ref{intro: main theorem}  and the table written therein follows from Corollary~\ref{last corollary}.

\begin{remark}
\begin{enumerate}
\item 
It is also interesting to note that except for the exceptional cases, we have $m(I,C_{g_1}\cdot C_{g_2}\cdot C_{g_3})=q^3-q=|\mathrm{PGL}_2(\F_q)|$.
Analogously, for $G=\GL_n(\F_q)$, except for some exceptional cases, we expect 
\[
m(I,C_{g_1}\cdot C_{g_2}\cdot C_{g_3})=|\mathrm{PGL}_n(\F_q)|=q^{n^2-1}\prod_{i=2}^{n}(1-q^{-i}).
\] 
\item
Observe that
\(
m\bigl(\mathbb{1},\pi_1\otimes\pi_2\otimes\pi_3\bigr)
\)
is the coefficient of $q^3$ in
\(
m\bigl(I,C_{g_1}\cdot C_{g_2}\cdot C_{g_3}\bigr),
\)
viewed as a polynomial in $q$.
Analogously, for $G=\GL_n(\F_q)$ we expect that \(
m\bigl(\mathbb{1},\pi_1\otimes\pi_2\otimes\pi_3\bigr)
\) is the coefficient of \( q^{n^2-1} \)
in \(
m\bigl(I,C_{g_1}\cdot C_{g_2}\cdot C_{g_3}\bigr)
\) seen as a polynomial in \(q\).
\end{enumerate}
\end{remark}

\section*{Acknowledgements}
The authors thank Dipendra Prasad for suggesting the problem and several insightful comments. 
AP acknowledges the support of IIT Bombay through the Institute Postdoctoral Fellowship (20004759).

\bibliographystyle{abbrv}
\bibliography{references}

\end{document}